\definecolor{blanchedalmond}{rgb}{1.0, 0.92, 0.8}
\newtheorem{theorem}{Theorem}
\newtheorem{lemma}[theorem]{Lemma}
\newtheorem{proposition}[theorem]{Proposition}
\theoremstyle{remark}
\newtheorem*{acknowledgement}{\textbf{Acknowledgments}}
\def\leq{\leqslant} \def\geq{\geqslant} \def\al{\alpha}
\begin{document}

\title{Linear relations of four conjugates of an algebraic number}

\author{Žygimantas Baronėnas, Paulius Drungilas, and Jonas Jankauskas}

\address{Institute of Mathematics, Faculty of Mathematics and Informatics, Vilnius
University, Naugarduko 24, Vilnius LT-03225, Lithuania}
\email{zygimantas.baronenas@mif.stud.vu.lt}

\address{Institute of Mathematics, Faculty of Mathematics and Informatics, Vilnius
University, Naugarduko 24, Vilnius LT-03225, Lithuania}
\email{paulius.drungilas@mif.vu.lt}

\address{Institute of Mathematics, Faculty of Mathematics and Informatics, Vilnius
University, Naugarduko 24, Vilnius LT-03225, Lithuania}
\email{jonas.jankauskas@mif.vu.lt}

\subjclass[2020]{11R04, 11R32} \keywords{Algebraic numbers, linear relations in algebraic conjugates}

\begin{abstract}
We characterize all algebraic numbers $\alpha$ of degree $d\in\{4,5,6,7\}$ for which there exist four distinct algebraic 
conjugates $\alpha_1$, $\alpha_2$, $\alpha_3$, $\alpha_4$ of $\alpha$ satisfying the relation $\alpha_{1}+\alpha_{2}=\alpha_{3}+\alpha_{4}$. 
In particular, we prove that an algebraic number $\alpha$ of degree 6  satisfies this relation with $\alpha_{1}+\alpha_{2}\notin\mathbb{Q}$ if and only if $\alpha$ is the sum of a quadratic and a cubic algebraic number. Moreover, we describe all possible Galois groups of the normal closure of $\mathbb{Q}(\alpha)$ for such algebraic numbers $\alpha$. We also consider similar relations $\alpha_{1}+\alpha_{2}+\alpha_{3}+\alpha_{4}=0$ and $\alpha_{1}+\alpha_{2}+\alpha_{3}=\alpha_{4}$ for algebraic numbers of degree up to 7. 
\end{abstract}
\maketitle

\section{Introduction}\label{intro}

Let $\alpha_{1}:=\alpha, \alpha_{2}, \dots, \alpha_{d}$ be the algebraic conjugates of an algebraic number $\alpha$ of degree $d$ over $\mathbb{Q}$. In the present paper we will be interested
in algebraic numbers $\alpha$ of small degree $d$ (namely, $d\leq 7$) whose conjugates satisfy one of the equations
\begin{align}\label{E:1}
\alpha_{1}+\alpha_{2}+\alpha_{3}+\alpha_{4}=0,\ \alpha_{1}+\alpha_{2}+\alpha_{3}=\alpha_{4}\ \text{ or }\ \alpha_{1}+\alpha_{2}=\alpha_{3}+\alpha_{4}.
\end{align}
The main motivation to study $\eqref{E:1}$ stems from the paper of Dubickas and Jankauskas \cite{DubickasJankauskas2015} where they investigated the linear relations $\alpha_{1}=\alpha_{2}+\alpha_{3}$ and $\alpha_{1}+\alpha_{2}+\alpha_{3}=0$ in conjugates of an algebraic number $\alpha$ of degree $d\leq 8$ over $\mathbb{Q}$. They proved that solutions to those equations exist only in the case $d = 6$ (except for the trivial solution of the second equation in cubic numbers with trace zero) and gave explicit formulas for all possible minimal polynomials of such algebraic numbers. In particular, equation $\alpha_{1} = \alpha_{2} + \alpha_{3}$ is solvable in roots of an irreducible sextic polynomial if and only
if it is an irreducible polynomial of the form 
\[
p(x)=x^{6}+2ax^{4}+a^{2}x^{2}+b\in\mathbb{Q}[x]. 
\]
Similarly, for $d$ in the range $4\leq d\leq 8$ (the case $d=3$ is trivial), equation $\alpha_{1} + \alpha_{2} + \alpha_{3} = 0$ is solvable if and only if $d = 6$ and the minimal polynomial of $\alpha$ over $\mathbb{Q}$ is an irreducible polynomial of the form 
\[
p(x) = x^{6} + 2ax^{4} + 2bx^{3} + (a^{2} - c^{2}t)x^{2} + 2(ab - cet)x + b^{2} - e^{2}t
\]
for some rational numbers $a, b, c, e\in\mathbb{Q}$ and some square-free integer $t\in\mathbb{Z}$.

Let $\alpha_1,\alpha_2,\alpha_3$ be three distinct algebraic conjugates of an algebraic number $\alpha$ of degree $d\leq 8$. Recently, Virbalas \cite{Virbalas2025} extended the research of Dubickas and Jankauskas \cite{DubickasJankauskas2015} by determining all possible linear relations of the form 
$a\alpha_1+b\alpha_2+c\alpha_3=0$ with non-zero rational numbers 
$a, b, c$. He also obtained a complete list of transitive groups that can
occur as Galois groups for the minimal polynomial of such an algebraic number $\alpha$. Moreover, Virbalas \cite{Virbalas2025a} proved that for any prime number $p\geq 5$ there does not exist an irreducible polynomial $p(x)\in\mathbb{Q}[x]$ 
of degree $2p$ whose three distinct roots sum up to zero.

Recently, Dubickas and Virbalas \cite{DuVi25} proved that every nontrivial linear relation between algebraic conjugates has a corresponding multiplicative relation. They also gave a complete characterization of all 
possible linear relations between four distinct algebraic conjugates of degree 4 (see, also, \cite{Kitaoka2017}).

Recall that a real algebraic integer $\alpha>1$ is called a Pisot number if all of its conjugates $\alpha_{j}$, other than $\alpha$ itself, satisfy $|\alpha_{j}|<1$. Dubickas, Hare, and Jankauskas in \cite{DubickasHareJankauskas2017} showed that there are no Pisot numbers whose conjugates satisfy the equation $\alpha_{1}=\alpha_{2}+\alpha_{3}$. They also proved the impossibility of 
\begin{align}\label{E:2}
\alpha_{1}+\alpha_{2}=\alpha_{3}+\alpha_{4}
\end{align}
 in conjugates of a Pisot number of degree $d>4$, by showing that there is a unique Pisot number, namely, $\alpha = (1+\sqrt{3+2\sqrt{5}})/2$ whose conjugates satisfy $\eqref{E:2}$. This particular number $\alpha$ was first found in \cite{DubickasSmyth2008}.

Throughout this paper, the term \textit{algebraic number} means algebraic number over the field of rational numbers $\mathbb{Q}$. Similarly, the term \textit{irreducible polynomial} means irreducible over $\mathbb{Q}$. Let $\alpha_1=\alpha, \alpha_2, \dotsc, \alpha_d$ be the algebraic conjugates of an algebraic number $\alpha$ of degree $d$. Then $tr(\alpha):=\alpha_1+\alpha_2+\dotsb+\alpha_d$ is called \textit{the trace} (or \textit{the absolute trace}) of $\alpha$. In the present paper, we restrict ourselves to the degrees in the range $4\leq d\leq 7$. The case $d=8$ is more complicated and will be treated in the future. We will not assume that $\alpha$ is a Pisot number in the equations $\eqref{E:1}$. For the first two equations in $\eqref{E:1}$ we have the following result:
\begin{theorem}\label{111}
Let $\alpha$ be an algebraic number of degree $d$, where $d\in\{4,5,6,7\}$.
\begin{itemize}
\item[(i)] Some four distinct algebraic conjugates of $\alpha$ satisfy the relation
\[
\alpha_{1}+\alpha_{2}+\alpha_{3}+\alpha_{4}=0
\]
if and only if $d=4$ and $tr(\alpha)=0$  or $d=6$ and the minimal polynomial of $\alpha$ is an irreducible polynomial of the form
\[
x^{6}+ax^{4}+bx^{2}+ c \in\mathbb{Q}[x].
\]
\item[(ii)] No four distinct conjugates of $\alpha$ satisfy the relation
\[
\alpha_{1}+\alpha_{2}+\alpha_{3}=\alpha_{4}.
\]
\end{itemize}
\end{theorem}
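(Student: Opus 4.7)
My plan is to reduce both parts of Theorem~\ref{111} to the identity $tr(\alpha)=0$ via a single Galois-averaging argument, and then finish each subcase either by a short degree computation or by invoking the classifications of \cite{DubickasJankauskas2015} and \cite{Virbalas2025}. Throughout, I write $G$ for the Galois group of the normal closure of $\mathbb{Q}(\alpha)$, acting transitively on the $d$ conjugates $\alpha_1,\dotsc,\alpha_d$.

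The first step is the trace reduction. The case $d=4$ of part~(i) is immediate, because the four distinct conjugates exhaust all conjugates of $\alpha$ and the relation is literally $tr(\alpha)=0$. For every other subcase of the theorem I apply each $\sigma\in G$ to the given relation, sum the resulting $|G|$ identities, and use transitivity to make each $\alpha_i$ appear with the same positive integer coefficient on the left, namely $4|G|/d$ for $\alpha_1+\alpha_2+\alpha_3+\alpha_4=0$ and $(3-1)|G|/d=2|G|/d$ for $\alpha_1+\alpha_2+\alpha_3=\alpha_4$. The summed identity thus becomes $c\cdot tr(\alpha)=0$ with $c>0$, forcing $tr(\alpha)=0$.

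With $tr(\alpha)=0$ in hand, part~(i) closes quickly. For $d=5$ the fifth conjugate satisfies $\alpha_5=tr(\alpha)-\sum_{i=1}^{4}\alpha_i=0$, contradicting $\deg\alpha=5$. For $d=7$ the three unused conjugates satisfy $\alpha_5+\alpha_6+\alpha_7=0$, contradicting \cite{DubickasJankauskas2015}. For $d=6$ the two unused conjugates satisfy $\alpha_5+\alpha_6=0$, so $-\alpha_5$ is a conjugate of $\alpha$ and the minimal polynomial $p(x)$ shares the root $\alpha_5$ with $p(-x)$; irreducibility forces $p(-x)=\pm p(x)$, and the minus sign is ruled out by $p(0)\neq 0$. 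Hence $p(x)=x^{6}+ax^{4}+bx^{2}+c$ as claimed. The converse is immediate: for such an irreducible sextic the six roots pair as $\pm\beta_i$ into three sign-pairs, and $\beta_1+(-\beta_1)+\beta_2+(-\beta_2)=0$ gives four distinct conjugates summing to zero whenever $\beta_1,\beta_2$ are chosen from different pairs.

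For part~(ii) the same reduction gives $tr(\alpha)=0$, which combined with the relation yields $\alpha_5+\cdots+\alpha_d=-2\alpha_4$. For $d=4$ this forces $\alpha_4=0$, a contradiction. For $d=5$ we get $\alpha_5=-2\alpha_4$, and taking $\sigma\in G$ with $\sigma(\alpha_4)=\alpha_5$ gives $\sigma^n(\alpha_4)=(-2)^n\alpha_4$ by induction; choosing $n=|\sigma|$ forces $(-2)^{|\sigma|}=1$, which is impossible. For $d=6$ the derived identity is the three-term relation $2\alpha_4+\alpha_5+\alpha_6=0$ between distinct conjugates of a degree-$6$ algebraic number, and I would apply Virbalas's classification \cite{Virbalas2025} to show that the coefficient triple $(2,1,1)$ is either absent from the admissible list for $d=6$ or is incompatible with the supplementary requirement $\alpha_1+\alpha_2+\alpha_3=\alpha_4$. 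For $d=7$ I would iterate the original relation along a suitable Galois element, subtract shifted copies, and use $tr(\alpha)=0$ to extract a three-term rational relation forbidden by \cite{Virbalas2025} in degree $7$. The hardest step, and the main obstacle, will be these last two subcases of part~(ii): Galois averaging alone does not finish them, and the delicate work is to combine the derived three- or four-term relation with the Galois action on the $3$-subsets $\sigma\{1,2,3\}$, most likely by invoking Virbalas's classification together with a short enumeration of the transitive subgroups of $S_6$ and $S_7$.
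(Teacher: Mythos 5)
Your trace-averaging reduction (a reproof of Lemma~\ref{intro3}), your handling of all of part~(i), and your part~(ii) arguments for $d=4$ and $d=5$ are correct; the orbit argument $\sigma^{n}(\alpha_4)=(-2)^{n}\alpha_4$ for $d=5$ is a nice elementary substitute for the lemma of Kurbatov (Lemma~\ref{intro4}) that the paper uses to dispose of the prime degrees. However, part~(ii) for $d=6$ and $d=7$ is left genuinely open: you say you \emph{would} consult Virbalas's classification to exclude the triple $(2,1,1)$, and you \emph{would} ``iterate and subtract shifted copies'' in degree $7$, but neither step is carried out, and you yourself flag these as the main obstacle. So as written the proof is incomplete exactly where you predicted.

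Both gaps close with tools already available and with no enumeration of transitive subgroups of $S_6$ or $S_7$. For $d=6$, the derived relation $2\alpha_4+\alpha_5+\alpha_6=0$ is killed outright by Dubickas's dominant-coefficient lemma (Lemma~\ref{lemma4}): since $\alpha_4,\alpha_5,\alpha_6$ are distinct conjugates and $|2|\geq|1|+|1|$, the combination $2\alpha_4+\alpha_5+\alpha_6$ cannot lie in $\mathbb{Q}$, let alone equal $0$. This is exactly what the paper does; no appeal to the full classification in \cite{Virbalas2025} is needed. For $d=7$, there is nothing to iterate: $7$ is prime, so Kurbatov's theorem (Lemma~\ref{intro4}) applied directly to $\alpha_1+\alpha_2+\alpha_3-\alpha_4=0$ (coefficient vector $(1,1,1,-1,0,0,0)$, not constant) gives an immediate contradiction --- the same one-line argument you could also have used for $d=5$ in both parts. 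With these two substitutions your proof becomes complete and is essentially the paper's proof, modulo your alternative treatments of $d=5$ in part~(i) (where $\alpha_5=0$ directly) and the citation of \cite{DubickasJankauskas2015} for $d=7$ in part~(i), both of which are valid.
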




The following theorem treats algebraic numbers $\alpha$ of degree $d\in\{4,5,6,7\}$  whose some four distinct algebraic conjugates satisfy the relation
\begin{equation}\label{eq:dd}
\alpha_{1}+\alpha_{2}=\alpha_{3}+\alpha_{4}.
\end{equation}
Note that for any $r\in\mathbb{Q}$, replacing $\alpha$ with $\alpha-r$ does not affect the relation \eqref{eq:dd}. By setting $r=tr(\alpha)/d$ we obtain an algebraic number $\alpha-tr(\alpha)/d$ whose trace equals zero. Therefore, without the loss of generality, we assume that $\alpha$ has zero trace.  

\begin{theorem}\label{444}
Let $\alpha$ be an algebraic number of degree $d\in\{4,5,6,7\}$  and $tr(\alpha)=0$. 
Denote by $p(x)$ the minimal polynomial of $\alpha$. 
Suppose that some four distinct algebraic conjugates of $\alpha$ satisfy the relation \eqref{eq:dd}. 
Then either $d=4$ or $d=6$.
Moreover, the following statements are true.  
\begin{itemize}
\item[$(i)$] If $d=4$, then $p(x)$ is an irreducible polynomial of the form
\[
p(x) = x^{4}+ax^{2}+b
\]
for some rational numbers $a, b\in\mathbb{Q}$. Conversely, for any such irreducible polynomial $p(x)$, some four distinct roots of $p(x)$ satisfy the relation \eqref{eq:dd}. 

\item[$(ii)$] Suppose that $d=6$ and the sum $\alpha_{1}+\alpha_{2}$ in \eqref{eq:dd} is a rational number.  Then $p(x)$ is an irreducible polynomial of the form
\[
p(x)=x^{6}+ax^{4}+bx^{2}+ c
\]
for some rational numbers $a, b, c\in\mathbb{Q}$. Conversely, for any such irreducible polynomial $p(x)$, some four distinct roots of $p(x)$ satisfy the relation \eqref{eq:dd}. 

\item[$(iii)$] Suppose that $d=6$ and the sum $\alpha_{1}+\alpha_{2}$ in \eqref{eq:dd} is not a rational number (i.e., $\alpha_{1}+\alpha_{2}\in\mathbb{C}\setminus\mathbb{Q}$).  
Then $p(x)$ is an irreducible polynomial of the form 
\begin{equation*}
\begin{split}
p(x)=&x^{6} + (2 b - 3 a) x^{4} + 2 c x^{3} + (3 a^{2} + b^{2}) x^{2} + 2c(3 a + b) x\\ &- a^{3} - 2 a^{2} b - a b^{2} + c^{2}.
\end{split}
\end{equation*}
for some rational numbers $a, b, c\in\mathbb{Q}$. Conversely, for any such irreducible polynomial $p(x)$, some four distinct roots of $p(x)$ satisfy the relation \eqref{eq:dd}.  
\end{itemize}
\end{theorem}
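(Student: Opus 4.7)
My plan is to first handle the preliminary reduction $d\in\{4,6\}$, then dispose of the easier statements $(i)$ and $(ii)$, and finally devote the main work to case $(iii)$.

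For ruling out $d\in\{5,7\}$, I set $V:=\mathbb{Q}^d$ with basis $e_1,\dots,e_d$ and let $G$, the Galois group of the splitting field of $p$, act on $V$ by coordinate permutation. The annihilator $U:=\{w\in V:\sum_i w_i\alpha_i=0\}$ is a $\mathbb{Q}[G]$-submodule containing $\mathbf{1}$ (by trace zero) and the relation vector $v:=e_1+e_2-e_3-e_4$, which lies in the zero-sum subspace $V_0\subset V$. Since $G$ is transitive, $d$ divides $|G|$, and by Cauchy's theorem $G$ contains a $d$-cycle $\sigma$ when $d$ is prime. Now $\mathbb{Q}[\langle\sigma\rangle]=\mathbb{Q}[x]/(x^d-1)\cong\mathbb{Q}\times\mathbb{Q}(\zeta_d)$ and $\mathbb{Q}(\zeta_d)$ is a field, so $V_0$ is $\mathbb{Q}[\langle\sigma\rangle]$-irreducible. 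The cyclic $\langle\sigma\rangle$-submodule of $V_0$ generated by $v$ therefore equals $V_0$, which forces $U\supseteq V_0+\mathbb{Q}\mathbf{1}=V$; but then $\alpha_i=0$ for every $i$, an absurdity.

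Cases $(i)$ and $(ii)$ are then straightforward. With $d=4$ and the trace-zero hypothesis, the relation $\alpha_1+\alpha_2=\alpha_3+\alpha_4$ forces $\alpha_1+\alpha_2=0$, so $-\alpha$ is a conjugate of $\alpha$; the minimal polynomial thus satisfies $p(-x)=p(x)$ and takes the form $x^4+ax^2+b$. For $d=6$ with $s:=\alpha_1+\alpha_2\in\mathbb{Q}$, the map $\alpha_i\mapsto s-\alpha_i$ is a fixed-point-free involution on the six conjugates (a fixed point would give $\alpha_i=s/2\in\mathbb{Q}$, impossible), so they pair up into three pairs of sum $s$; trace zero forces $3s=0$, hence $s=0$ and $p$ is again even, $p(x)=x^6+ax^4+bx^2+c$. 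The converses follow by pairing roots as $\pm\alpha_i$.

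The real work is in case $(iii)$, where $d=6$ and $s\notin\mathbb{Q}$. The main task is to prove that $\alpha$ is the sum of a quadratic and a cubic algebraic number. Reapplying the module-theoretic argument for $d=6$: if $G$ were doubly transitive on the six conjugates, $V_0$ would be $\mathbb{Q}[G]$-irreducible and the contradiction above would recur. All primitive transitive subgroups of $S_6$ (namely $A_5,S_5,A_6,S_6$ in their degree-$6$ actions) are doubly transitive, so $G$ must be imprimitive. The critical step — and the main technical obstacle — is to show that $G$ preserves the pair partition $\Pi_0:=\{\{\alpha_1,\alpha_4\},\{\alpha_2,\alpha_3\},\{\alpha_5,\alpha_6\}\}$ as a block system, yielding a cubic subfield $\mathbb{Q}(\gamma)\subseteq\mathbb{Q}(\alpha)$ with $2\gamma=\alpha_1+\alpha_4$, together with a two-triple block system yielding a quadratic subfield $\mathbb{Q}(\beta)\subseteq\mathbb{Q}(\alpha)$. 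A key ingredient is that $-2s$ cannot be a Galois conjugate of $s$: if some $\sigma\in G$ satisfied $\sigma(s)=-2s$, iteration would give $\sigma^n(s)=(-2)^ns=s$ for the finite order $n$ of $\sigma$, forcing $(-2)^n=1$, impossible. This pins down the pair $\{\alpha_5,\alpha_6\}$ as distinguished; combined with the fact that any third disjoint pair of sum $s$ would force $3s=0$, it constrains the $G$-orbit of $\{\alpha_5,\alpha_6\}$ to exactly partition $\{1,\dots,6\}$ into three pairs.

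Once $\alpha=\beta+\gamma$ is secured with $\beta^2=a$ and $\gamma^3+b\gamma+c=0$ (trace zero, attainable by absorbing a rational shift between $\beta$ and $\gamma$), the explicit polynomial form emerges by direct computation: substituting $\gamma=\alpha-\beta$ into the cubic, isolating the terms containing $\beta$, and squaring via $\beta^2=a$ yields the displayed degree-$6$ polynomial after routine expansion. For the converse, the six roots of the displayed polynomial are $\pm\sqrt{a}+\gamma_j$ for $\gamma_j$ the three roots of $x^3+bx+c$, and the four distinct conjugates $\sqrt{a}+\gamma_1$, $-\sqrt{a}+\gamma_2$, $\sqrt{a}+\gamma_2$, $-\sqrt{a}+\gamma_1$ have pairwise sums equal to $\gamma_1+\gamma_2=-\gamma_3\notin\mathbb{Q}$, since $\gamma$ has degree three.
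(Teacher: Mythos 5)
Your reduction to $d\in\{4,6\}$ (a self-contained permutation-module argument in place of Kurbatov's lemma), your case $(i)$, and your case $(ii)$ (the fixed-point-free involution $x\mapsto s-x$, avoiding Proposition~\ref{333} altogether) are all correct, and the final computation and the converse direction of $(iii)$ match the paper's $R(x-\sqrt{a})R(x+\sqrt{a})$ factorization. However, the forward direction of case $(iii)$ --- which is where essentially all of the paper's work lies (Propositions~\ref{333} and~\ref{prop562} and Theorem~\ref{propqc}) --- is not proved in your proposal; it is only announced. You yourself label the decisive claim ``the critical step --- and the main technical obstacle,'' and the two ingredients you offer do not close it. First, you never show that $s=\alpha_1+\alpha_2$ has degree $3$; the paper needs a separate counting argument (Proposition~\ref{333}) comparing the number of distinct expressions $\alpha_i+\alpha_j$ of conjugates of $s$ and of $-2s$ against the bound $\binom{6}{2}=15$. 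Second, the observation that no $\sigma\in G$ sends $s$ to $-2s$ does not show that the $G$-orbit of the pair $\{\alpha_5,\alpha_6\}$ consists of pairwise disjoint pairs: it rules out $\sigma(\{\alpha_5,\alpha_6\})$ being a pair that expresses a conjugate of $s$, but not, say, $\sigma(\alpha_5)=\alpha_5$, $\sigma(\alpha_6)=\alpha_1$ with $\alpha_5+\alpha_1=-2s'$ for another conjugate $s'$ of $s$. Excluding such overlaps is exactly what the paper does via the ``unique expression'' and ``each conjugate appears exactly once'' arguments, which rely on Lemmas~\ref{intro7} and~\ref{lemma4}.

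There is a second, independent gap: even granting the block system of three pairs (hence a cubic subfield containing $s$), your claim that there is ``together with [it] a two-triple block system yielding a quadratic subfield'' does not follow from imprimitivity. A transitive subgroup of $S_6$ can admit size-$2$ blocks without admitting any index-$2$ subgroup --- the transitive $A_4\leq S_6$ is such an example --- so the existence of the quadratic number $(\alpha_1-\alpha_4)/2$, and hence of the decomposition $\alpha=\gamma+\delta$ with $\deg\gamma=2$, $\deg\delta=3$, genuinely requires pinning down the action of $G$ on the full relation scheme \eqref{eq:rel3}, which is the content of Proposition~\ref{prop562} (showing $G$ is one of $D_6$, $C_6$, $S_3$). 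Until both of these points are supplied, the passage from the hypothesis of $(iii)$ to $\alpha$ being a sum of a quadratic and a cubic number, and thence to the displayed sextic, is unestablished.
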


The following theorem gives an alternative description of sextic algebraic numbers $\alpha$ that satisfy the relation \eqref{eq:dd} with $\alpha_{1}+\alpha_{2}\notin\mathbb{Q}$. We will derive this result from Proposition~\ref{prop562} (see Section~\ref{intro2}).

\begin{theorem}\label{propqc}
Let $\alpha$ be an algebraic number of degree 6. Some four distinct algebraic conjugates of $\alpha$ satisfy the relation $\alpha_{1}+\alpha_{2}=\alpha_{3}+\alpha_{4}=:\beta\notin\mathbb{Q}$ 
if and only if $\alpha$ equals the sum of a quadratic and a cubic algebraic number.
\end{theorem}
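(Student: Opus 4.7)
The plan is to prove the two directions separately, leaning on Theorem~\ref{444}(iii) (which gives the explicit form of $p(x)$) for the harder reverse direction.

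\emph{Forward direction.} Assume $\alpha = \gamma + \delta$ with $\deg(\gamma) = 2$ and $\deg(\delta) = 3$. Since these degrees are coprime, the composite $\mathbb{Q}(\gamma, \delta)$ has degree $6$ and, because $\deg(\alpha) = 6$, must equal $\mathbb{Q}(\alpha)$. Hence the six elements $\gamma_i + \delta_j$ ($i \in \{1,2\}$, $j \in \{1,2,3\}$) are pairwise distinct and are exactly the six conjugates of $\alpha$. Setting $\alpha_1 := \gamma_1 + \delta_1$, $\alpha_2 := \gamma_2 + \delta_2$, $\alpha_3 := \gamma_1 + \delta_2$, $\alpha_4 := \gamma_2 + \delta_1$ yields
\begin{equation*}
\alpha_1 + \alpha_2 = \alpha_3 + \alpha_4 = tr(\gamma) + (tr(\delta) - \delta_3),
\end{equation*}
which is irrational because $\delta$ has degree $3$.

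\emph{Reverse direction.} Suppose $\alpha$ of degree $6$ admits the relation with $\beta \notin \mathbb{Q}$. Since shifting $\alpha$ by a rational number preserves both the relation and the property of being a sum of a quadratic and a cubic, I may normalize $tr(\alpha) = 0$. By Theorem~\ref{444}(iii),
\begin{equation*}
p(x) = x^{6} + (2b-3a)x^{4} + 2cx^{3} + (3a^{2}+b^{2})x^{2} + 2c(3a+b)x - a^{3} - 2a^{2}b - ab^{2} + c^{2}
\end{equation*}
for some $a, b, c \in \mathbb{Q}$. The crux is to recognize $p(x)$ as the minimal polynomial of $\gamma + \delta$, where $\gamma^{2} = a$ and $\delta^{3} + b\delta + c = 0$. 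I would verify this by eliminating $\gamma$ from the system $(\alpha - \gamma)^{3} + b(\alpha - \gamma) + c = 0$ and $\gamma^{2} = a$: the first equation is linear in $\gamma$ after reducing via $\gamma^{2} = a$, giving $\gamma = (\alpha^{3} + (3a+b)\alpha + c)/(3\alpha^{2} + a + b)$, and squaring reproduces $p(\alpha) = 0$. Irreducibility of $p(x)$ then forces both $y^{2} - a$ and $z^{3} + bz + c$ to be irreducible over $\mathbb{Q}$, so $\gamma$ and $\delta$ have degrees $2$ and $3$ respectively; since $\alpha$ is a root of the irreducible $p(x)$ it is Galois-conjugate to $\gamma + \delta$, and any Galois conjugate of a quadratic-plus-cubic sum is again such a sum.

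\emph{Main obstacle.} The only nontrivial step is the coefficient-matching calculation that identifies $p(x)$ with the minimal polynomial of $\gamma + \delta$, i.e.\ expanding $a(3\alpha^{2} + a + b)^{2} = (\alpha^{3} + (3a+b)\alpha + c)^{2}$ and comparing term by term with $p(x)$. This is mechanical but bookkeeping-heavy. No delicate Galois argument is required once Theorem~\ref{444}(iii) is granted; the new content of Theorem~\ref{propqc} is essentially the structural reinterpretation of that explicit sextic as the minimal polynomial of a ``quadratic-plus-cubic'' sum.
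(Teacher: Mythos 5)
Your forward (sufficiency) direction is correct and essentially identical to the paper's: coprimality of the degrees gives $[\mathbb{Q}(\gamma,\delta):\mathbb{Q}]=6=\deg(\gamma+\delta)$, the six sums $\gamma_i+\delta_j$ are the six conjugates, and the identity $(\gamma_1+\delta_1)+(\gamma_2+\delta_2)=(\gamma_1+\delta_2)+(\gamma_2+\delta_1)$ with $\alpha_1+\alpha_2=tr(\gamma)+tr(\delta)-\delta_3$ cubic finishes it.

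The reverse (necessity) direction, however, has a genuine gap: it is circular relative to this paper's logical structure. You invoke Theorem~\ref{444}(iii) to get the explicit sextic $p(x)=R(x-\sqrt{a})R(x+\sqrt{a})$, but the paper proves Theorem~\ref{444}(iii) \emph{from} Theorem~\ref{propqc} --- the very first step of that proof is ``by Theorem~\ref{propqc}, $\alpha$ is a sum of a quadratic and a cubic algebraic number,'' after which the explicit form of $p(x)$ is obtained by expanding $\prod_{i,j}(x-\gamma_i-\delta_j)$. So the explicit polynomial you take as input is only known \emph{because} of the decomposition you are trying to establish; no independent derivation of that polynomial form is available (or supplied by you). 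The paper instead proves necessity directly from Proposition~\ref{prop562}: after normalizing $tr(\alpha)=0$, the relations
\begin{equation*}
\beta_{1}=\alpha_{1}+\alpha_{2}=\alpha_{3}+\alpha_{4},\quad \beta_{2}=\alpha_{2}+\alpha_{5}=\alpha_{3}+\alpha_{6},\quad \beta_{3}=\alpha_{1}+\alpha_{6}=\alpha_{4}+\alpha_{5}
\end{equation*}
and the three admissible Galois groups show that every element of $G$ sends $\alpha_1-\alpha_4$ to $\pm(\alpha_1-\alpha_4)$, so $\alpha_1-\alpha_4$ is quadratic, while $-2\beta_2=\alpha_1+\alpha_4$ gives $\alpha_1=\tfrac{1}{2}(\alpha_1-\alpha_4)-\beta_2$, a quadratic plus a cubic. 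To repair your argument you would either have to reproduce something like this Galois-theoretic step, or give a proof of Theorem~\ref{444}(iii) that does not pass through Theorem~\ref{propqc}; your coefficient-matching computation, while correct as far as it goes, only verifies the converse implication (a polynomial of that form has roots of the shape $\pm\sqrt{a}+\delta_j$), not the needed classification.
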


Let $\alpha$ be an algebraic number of degree $d$ and let $G$ be the Galois group of the normal closure of $\mathbb{Q}(\alpha)$ over $\mathbb{Q}$. The group $G$ is determined (in a unique way) by its action on $\mathcal{S}=\{\alpha_{1}, \alpha_{2},\dots, \alpha_{d}\}$: it corresponds to some transitive subgroup of the full symmetric group $S_{d}$. Next, we will consider possible groups $G$, related to algebraic numbers $\alpha$ in Theorem \ref{111} and Theorem \ref{444}. 

If $d=4$ and the linear relation in Theorem \ref{111} is satisfied, then $G$ is isomorphic to one of 5 transitive subgroups of the symmetric group $S_{4}$, namely, $V_{4}$ (Klein 4-group), $C_{4}$ (a cyclic group of order 4), $D_{4}$ (a dihedral group of order 8), $A_{4}$ (the alternating group) or $S_{4}$ itself. There are no more transitive subgroups of $S_{4}$ (see, e.g., \cite[Chapter 3]{dixon1996} or \cite{rotman1995}).

If $d=6$ and some four distinct conjugates of $\alpha$ satisfy the relation in $(i)$ of Theorem \ref{111}, then we need to look at the transitive subgroups of $S_{6}$. Awtrey and Jakes in \cite{AwtreyJakes2020} investigated the Galois groups of even sextic polynomials $x^{6} + ax^{4} + bx^{2} + c$ with coefficients from a field of characteristic $\neq 2$. In this particular case, there are 8 possibilities for the Galois group $G$:
\begin{equation}\label{E:3}
C_{6},\ S_{3},\ D_{6},\ A_{4},\ A_{4}\times C_{2},\ S_{4}^{+},\ S_{4}^{-},\ S_{4}\times C_{2},
\end{equation}
where $S_{4}^{+}$ and $S_{4}^{-}$ are certain transitive subgroups of $S_{6}$ of order $24$. 
Note that, in total, there are 16 transitive subgroups of $S_{6}$ (see, e.g., \cite[Chapter 3]{dixon1996}). Awtrey and Jakes in \cite{AwtreyJakes2020} also provided one-parameter families of even sextic polynomials (for values of $t\in\mathbb{Q}$ that result in irreducible polynomials) with specified Galois group over $\mathbb{Q}$ (see Table~\ref{table:ce3}).

\begin{table}[h!]
\centering
\begin{tabular}{ |c|c| } 
\hline
Polynomial $p(x)$ & Galois group $G$ of $p(x)$\\
\hline
$x^{6} + (t^{2}+5)x^{4} + ((t-1)^{2}+5)x^{2} + 1$ & $C_{6}$\\ 
$x^{6} + 3t^{2}$ & $S_{3}$\\ 
$x^{6} + 2t^{2}$ & $D_{6}$\\
$x^{6} - 3t^{4}x^{2} - t^{6}$ & $A_{4}$\\
$x^{6} - 3t^{2}x^{2} + t^{3}$ & $A_{4}\times C_{2}$\\ 
$x^{6} + t^{2}x^{4} - t^{6}$ & $S_{4}^{+}$\\ 
$x^{6} + (31t^{2})^{2}x^{2} + (31t^{2})^{3}$ & $S_{4}^{-}$\\
$x^{6} + (2t^{2})^{2}x^{2} + (2t^{2})^{3}$ & $S_{4}\times C_{2}$\\
\hline
\end{tabular}
\vspace{0,2cm}
\caption{One-parameter families of even sextic polynomials $p(x)$ with corresponding Galois groups $G$.}
\label{table:ce3}
\end{table}


If $d=4$ and the linear relation in \eqref{eq:dd} is satisfied, then $G$ is one of 3 transitive subgroups of the symmetric group $S_{4}$: $V_{4}, C_{4}$ or $D_{4}$. This result is due to Kappe and Warren (see Theorem~3 in \cite{KappeWarren1989}).  Again, Awtrey and Jakes in \cite{AwtreyJakes2020} provided one-parameter families of even quartic polynomials (except for values of $t\in\mathbb{Q}$ that result in reducible polynomials) with specified Galois group over $\mathbb{Q}$ (see Table~\ref{table:ce4}).

\begin{table}[h!]
\centering
\begin{tabular}{ |c|c| } 
\hline
Polynomial $p(x)$ & Galois group $G$ of $p(x)$\\
\hline
$x^{4} + (2t+1)^{2}$ & $V_{4}$\\ 
$x^{4} + 4tx^{2} + 2t^{2}$ & $C_{4}$\\ 
$x^{4} + t^{2} + 1,\ t\neq 0$ & $D_{4}$\\
\hline
\end{tabular}
\vspace{0,2cm}
\caption{One-parameter families of even quartic polynomials $p(x)$ with corresponding Galois groups $G$.}
\label{table:ce4}
\end{table}


If $d=6$ and some four distinct conjugates of $\alpha$ satisfy $\alpha_{1}+\alpha_{2}=\alpha_{3}+\alpha_{4}\in\mathbb{Q}$, then the Galois group is, again, one of the already mentioned 8 transitive subgroups in $\eqref{E:3}$.


The most interesting case is the following

\begin{theorem}\label{555}
Let $\alpha$ be an algebraic number of degree $6$ and $tr(\alpha)=0$. Suppose that some four distinct algebraic conjugates of $\alpha$ satisfy the relation
\[
\alpha_{1}+\alpha_{2}=\alpha_{3}+\alpha_{4}=:\beta\not\in\mathbb{Q}.
\]
Then the Galois group of the normal closure of $\mathbb{Q}(\alpha)$ over $\mathbb{Q}$ is isomorphic to one of the three groups: dihedral group $D_6$ of order 12, symmetric group $S_3$ and the cyclic group $C_6$.

\end{theorem}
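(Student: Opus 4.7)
The plan is to leverage Theorem~\ref{propqc} to decompose $\alpha$ additively, then identify the normal closure as a compositum of two simpler Galois-theoretically understood extensions, and finally enumerate the possibilities case-by-case.

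First, by Theorem~\ref{propqc}, I may write $\alpha = \gamma + \delta$ with $\gamma$ a quadratic and $\delta$ a cubic algebraic number. Let $\gamma_1, \gamma_2$ denote the conjugates of $\gamma$ and $\delta_1, \delta_2, \delta_3$ those of $\delta$. Since $\alpha$ has degree $6 = 2 \cdot 3$, the six distinct conjugates of $\alpha$ must be precisely the numbers $\alpha_{ij} := \gamma_i + \delta_j$ for $i \in \{1,2\}$ and $j \in \{1,2,3\}$.

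Let $L$ be the normal closure of $\mathbb{Q}(\alpha)$ over $\mathbb{Q}$ and $M$ the normal closure of $\mathbb{Q}(\delta)$ over $\mathbb{Q}$. The key step is to prove $L = \mathbb{Q}(\gamma) \cdot M$. The inclusion $L \subseteq \mathbb{Q}(\gamma) \cdot M$ is clear because every $\alpha_{ij}$ lies in $\mathbb{Q}(\gamma, \delta_1, \delta_2, \delta_3)$. For the reverse inclusion, observe that
\[
\alpha_{1j} - \alpha_{2j} = \gamma_1 - \gamma_2 \quad \text{and} \quad \alpha_{i1} + \alpha_{i2} + \alpha_{i3} = 3\gamma_i + tr(\delta)
\]
both belong to $L$. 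Since $\gamma_1 \ne \gamma_2$ and $tr(\delta) \in \mathbb{Q}$, the former relation yields $\mathbb{Q}(\gamma) \subseteq L$ and the latter yields both $\gamma_1, \gamma_2 \in L$ individually; subtracting, $\delta_j = \alpha_{1j} - \gamma_1 \in L$ for each $j$, so $M \subseteq L$.

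Finally, I would split into cases according to $\Gal(M/\mathbb{Q}) \in \{C_3, S_3\}$. If $\Gal(M/\mathbb{Q}) \cong C_3$, then $[M:\mathbb{Q}] = 3$ is coprime to $[\mathbb{Q}(\gamma):\mathbb{Q}] = 2$, hence $\mathbb{Q}(\gamma) \cap M = \mathbb{Q}$ and $\Gal(L/\mathbb{Q}) \cong C_2 \times C_3 \cong C_6$. Suppose instead $\Gal(M/\mathbb{Q}) \cong S_3$. If $\mathbb{Q}(\gamma) \subseteq M$, then $L = M$ and $\Gal(L/\mathbb{Q}) \cong S_3$. If $\mathbb{Q}(\gamma) \not\subseteq M$, then $\mathbb{Q}(\gamma) \cap M$ is a proper subfield of the quadratic field $\mathbb{Q}(\gamma)$ and therefore equals $\mathbb{Q}$; since $M/\mathbb{Q}$ is Galois, $\mathbb{Q}(\gamma)$ and $M$ are linearly disjoint over $\mathbb{Q}$, giving $\Gal(L/\mathbb{Q}) \cong C_2 \times S_3 \cong D_6$. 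These three situations exhaust the possibilities, proving the claim.

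The main obstacle is establishing the compositum identity $L = \mathbb{Q}(\gamma) \cdot M$; once this and the additive decomposition from Theorem~\ref{propqc} are in hand, the group-theoretic case analysis reduces to routine observations about linear disjointness of quadratic and cubic (or $S_3$-) extensions.
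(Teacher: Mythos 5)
Your proof is correct, but it takes a genuinely different route from the paper's. The paper obtains Theorem~\ref{555} as an immediate corollary of Proposition~\ref{prop562}, which is proved by a hands-on combinatorial argument: after Proposition~\ref{333} shows $\beta$ is cubic, the conjugates are explicitly relabeled to satisfy the relations \eqref{eq:rel3}, the Orbit--Stabilizer Theorem pins $|G|$ to $6$ or $12$, the permutation $\sigma=(1\,3\,5)(2\,6\,4)$ is shown to lie in $G$, and a SageMath enumeration of transitive subgroups of $S_6$ finishes the classification; Theorem~\ref{propqc} is then \emph{derived from} Proposition~\ref{prop562}. You instead start from Theorem~\ref{propqc}, write $\alpha=\gamma+\delta$, identify the normal closure $L$ as the compositum $\mathbb{Q}(\gamma)\cdot M$ with $M$ the splitting field of the cubic, and read off $\Gal(L/\mathbb{Q})$ as $C_2\times C_3\cong C_6$, $S_3$, or $C_2\times S_3\cong D_6$ by linear disjointness. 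This is cleaner, avoids all permutation bookkeeping and the computer search, and gives extra structural information (which group occurs is governed by whether the cubic is cyclic and whether $\mathbb{Q}(\gamma)$ equals the quadratic resolvent subfield of $M$). The only caveat is architectural, not logical: the necessity direction of Theorem~\ref{propqc} is itself proved in the paper via Proposition~\ref{prop562}, so your argument does not shorten the overall development unless that direction is established independently; but since Theorem~\ref{555} is nowhere used in the proof of Theorem~\ref{propqc}, there is no circularity, and your derivation stands as a valid and illuminating alternative. Minor polish: you could note explicitly that the six sums $\gamma_i+\delta_j$ are pairwise distinct because $\alpha$ has exactly six conjugates, each of this form, and there are only six index pairs.
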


Theorem~\ref{555} follows from Proposition~\ref{prop562}, which gives more details on the possible Galois group of the normal closure of $\mathbb{Q}(\alpha)$. 
Moreover, all three groups in Theorem~\ref{555} arise as Galois groups in this setting, i.e., 
for any group $G\in\{D_6,S_3, C_6\}$ there exists an algebraic number $\alpha$ of degree $6$ satisfying $\alpha_{1}+\alpha_{2}=\alpha_{3}+\alpha_{4}\not\in\mathbb{Q}$ such that the Galois group of the normal closure of $\mathbb{Q}(\alpha)$ over $\mathbb{Q}$ is isomorphic to $G$. Corresponding examples are provided in Table~\ref{table:ce1}.

\begin{table}[h!]
\centering
\begin{tabular}{ |c|c|c| } 
\hline
Polynomial $p(x)$ & $(a,b,c)$ & Galois group $G$ of $p(x)$\\
\hline
 $x^{6} - 6 x^{4} + 4 x^{3} + 12 x^{2} + 24 x - 4$ & $(2,0,2)$ & $D_{6}$\\ 
 $x^{6} - 3 x^{4} + 2 x^{3} + 12 x^{2} - 12 x + 17$ & $(-1,-3,1)$ & $C_{6}$\\ 
 $x^{6} - 3 x^{4} + 8 x^{3} + 12 x^{2} - 48 x + 32$ & $(-1,-3,4)$ & $S_{3}$\\
\hline
\end{tabular}
\vspace{0,2cm}
\caption{Minimal polynomials $p(x)$ from part $(iii)$ of Theorem~\ref{444} with the corresponding Galois groups $G$.}
\label{table:ce1}
\end{table}

The converse of  Theorem~\ref{555} is false, i.e., for any group $G\in\{D_6,S_3, C_6\}$ there exists an algebraic number $\alpha$ of degree $6$ such that the Galois group of the normal closure of $\mathbb{Q}(\alpha)$ over $\mathbb{Q}$ is isomorphic to $G$ and no four distinct algebraic conjugates of $\alpha$ satisfy the relation $\alpha_{1}+\alpha_{2}=\alpha_{3}+\alpha_{4}$. Indeed, it suffices to take an irreducible polynomial of degree 6, having the specified Galois group, which is not of the form given in $(iii)$ of Theorem~\ref{444}. Such examples are provided in Table~\ref{table:ce2}. 

\begin{table}[h!]
\centering
\begin{tabular}{ |c|c| } 
\hline
Polynomial $p(x)$ & Galois group $G$ of $p(x)$\\
\hline
 $x^{6} + 2x^{3} + 2$ & $D_{6}$\\ 
 $x^{6} + x^{3} + 1$ & $C_{6}$\\ 
 $x^{6} + 54x^{3} + 1029$ & $S_{3}$\\
\hline
\end{tabular}
\vspace{0,2cm}
\caption{Polynomials $p(x)$ that are not of the form given in $(iii)$ of Theorem~\ref{444} with the corresponding Galois groups $G$.}
\label{table:ce2}
\end{table}


The paper is organized as follows. Auxiliary results are stated in Section~\ref{intro2}.  The proofs of the main results are given in Section~\ref{proofs}. We first prove Proposition~\ref{333} and \ref{prop562}. Theorem~\ref{555} directly follows from Proposition~\ref{prop562}. Then we use Proposition~\ref{prop562} to prove   Theorem~\ref{propqc}, which then is used to prove Theorem~\ref{444}.  



\section{Auxiliary results}\label{intro2}

The following result is due to Kurbatov \cite{Kurbatov1977}. We will use it to eliminate impossible relations among algebraic conjugates.

\begin{lemma}\label{intro4}
The equality
\[
k_{1}\alpha_{1}+k_{2}\alpha_{2}+\cdots+k_{d}\alpha_{d} = 0
\]
with conjugates $\alpha_{1}, \alpha_{2},\dots, \alpha_{d}$ of an algebraic number $\alpha$ of prime degree $d$ over $\mathbb{Q}$ and $k_{1}, k_{2},\dots, k_{d}\in\mathbb{Z}$ can only hold if  $k_{1} = k_{2} = \cdots = k_{d}$.
\end{lemma}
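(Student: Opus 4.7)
The plan is to exploit the primality of $d$ via the Galois group of the normal closure $L/\mathbb{Q}$ of $\mathbb{Q}(\alpha)/\mathbb{Q}$. Since $\Gal(L/\mathbb{Q})$ acts transitively on the $d$ conjugates of $\alpha$, its order is divisible by $d$, so Cauchy's theorem furnishes an element $\sigma\in\Gal(L/\mathbb{Q})$ of order $d$; being a permutation of $\{\alpha_1,\dots,\alpha_d\}$ of order equal to the prime $d$, $\sigma$ must act as a full $d$-cycle on these conjugates. After relabeling, I may assume $\sigma(\alpha_i)=\alpha_{i+1}$ with indices read modulo $d$.

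Next, I would apply $\sigma^0,\sigma^1,\dots,\sigma^{d-1}$ to the given relation $\sum_{i=1}^d k_i\alpha_i=0$ to produce $d$ equations, which in matrix form read $C\cdot(\alpha_1,\dots,\alpha_d)^T=\mathbf{0}$, where $C$ is the $d\times d$ circulant matrix with first row $(k_1,\dots,k_d)$. Over $\mathbb{C}$ the eigenvalues of $C$ are $\lambda_j=f(\omega^j)$, $j=0,\dots,d-1$, where $f(x)=k_1+k_2 x+\dots+k_d x^{d-1}$ and $\omega=e^{2\pi i/d}$ is a primitive $d$-th root of unity, with corresponding eigenvectors $(1,\omega^j,\omega^{2j},\dots,\omega^{(d-1)j})^T$.

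The conclusion will then follow from a trichotomy on the vanishing of the $\lambda_j$. If all $\lambda_j\neq 0$, then $C$ is invertible and forces $(\alpha_1,\dots,\alpha_d)=\mathbf{0}$, contradicting $\alpha\neq 0$ (as $\deg\alpha=d\geq 2$). If only $\lambda_0=0$, the kernel of $C$ over $\mathbb{Q}$ (and hence over $L$, by preservation of kernel dimension under base change) is spanned by $(1,\dots,1)^T$, forcing $\alpha_1=\dots=\alpha_d$ and contradicting distinctness of the conjugates. Otherwise $\lambda_k=0$ for some $k\in\{1,\dots,d-1\}$; since $d$ is prime, the minimal polynomial of $\omega^k$ over $\mathbb{Q}$ is the cyclotomic polynomial $\Phi_d(x)=1+x+\dots+x^{d-1}$, which has degree $d-1=\deg f$ and therefore must equal $f$ up to a rational scalar. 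This gives $k_1=k_2=\dots=k_d$, as required. The main delicacy is the middle case, where one must verify that the complex eigen-analysis translates to a statement over $\mathbb{Q}$ and then over $L$; everything else is routine linear algebra combined with the standard irreducibility of $\Phi_d$ for prime $d$.
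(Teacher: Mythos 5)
The paper offers no proof of this lemma at all --- it is imported as a black box from Kurbatov's 1977 paper --- so there is nothing internal to compare against; I can only assess your argument on its own terms, and it is correct. The chain transitivity $\Rightarrow d\mid |G|$ $\Rightarrow$ (Cauchy) an order-$d$ element $\Rightarrow$ a $d$-cycle on the conjugates is sound, provided you make explicit that the action of $G$ on $\{\alpha_1,\dots,\alpha_d\}$ is faithful (the normal closure is generated by the conjugates), so the induced permutation genuinely has order $d$ and, $d$ being prime, must be a single $d$-cycle; the subsequent relabeling permutes the $k_i$, which is harmless because the conclusion is symmetric in them. The circulant system and the eigenvalue trichotomy are exhaustive, and each branch is disposed of correctly. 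Two spots deserve polish rather than repair. In the middle case no base-change argument is needed: the Fourier eigenvectors form a basis of $\mathbb{C}^{d}$, so the kernel of $C$ over $\mathbb{C}$ is exactly the span of the eigenvectors with vanishing eigenvalue, here $\mathbb{C}\cdot(1,\dots,1)^{T}$; since $(\alpha_1,\dots,\alpha_d)^{T}$ lies in that kernel, the conjugates would all coincide, contradicting their distinctness. In the final case you should first set aside the possibility $f\equiv 0$ (then every $k_i=0$ and the conclusion holds trivially) before arguing that a nonzero $f$ of degree at most $d-1$ with rational coefficients vanishing at a primitive $d$-th root of unity must be a rational multiple of $\Phi_d(x)=1+x+\cdots+x^{d-1}$. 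Neither point is a genuine gap.
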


Smyth's result from \cite{Smyth1982} is useful for similar purposes.

\begin{lemma}\label{intro7}
If $\alpha_{1}, \alpha_{2}, \alpha_{3}$ are three conjugates of an algebraic number satisfying $\alpha_{1}\neq\alpha_{2}$ then $2\alpha_{1}\neq \alpha_{2} + \alpha_{3}$.
\end{lemma}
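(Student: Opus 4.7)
The plan is to argue by contradiction using the archimedean absolute value together with transitivity of the Galois action on conjugates. Assume $2\alpha_1 = \alpha_2 + \alpha_3$ with $\alpha_1 \neq \alpha_2$; then automatically $\alpha_2 \neq \alpha_3$, since $\alpha_2 = \alpha_3$ would force $\alpha_1 = \alpha_2$, contrary to the hypothesis.

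I would then fix an embedding of the normal closure of $\mathbb{Q}(\alpha)$ into $\mathbb{C}$ and set $M := \max_i |\alpha_i|$. The key geometric input is that for any two distinct complex numbers $a \neq b$ the midpoint lies strictly inside the disk of radius $\max(|a|,|b|)$, i.e.\ $|(a+b)/2| < \max(|a|,|b|)$. This is immediate from the triangle inequality when $|a| \neq |b|$, and in the remaining case $|a| = |b|$ with $a \neq b$ it follows from the strict inequality $\operatorname{Re}(a\bar b) < |a|\,|b|$ by expanding $|a+b|^2 = |a|^2 + |b|^2 + 2\operatorname{Re}(a\bar b)$.

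Next, choose $j$ with $|\alpha_j| = M$. By transitivity of the Galois group $G$ of the normal closure of $\mathbb{Q}(\alpha)$ on $\{\alpha_1, \dots, \alpha_d\}$, there exists $\sigma \in G$ with $\sigma(\alpha_1) = \alpha_j$. Applying $\sigma$ to the hypothesized relation yields
\[
2\alpha_j = \sigma(\alpha_2) + \sigma(\alpha_3),
\]
where $\sigma(\alpha_2), \sigma(\alpha_3)$ are two distinct conjugates of $\alpha$, hence each has absolute value at most $M$. The midpoint inequality then forces
\[
M = |\alpha_j| < \max(|\sigma(\alpha_2)|,|\sigma(\alpha_3)|) \leq M,
\]
which is the desired contradiction.

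The only subtlety is the strict form of the midpoint bound, which is exactly the point at which the assumption $\alpha_2 \neq \alpha_3$ enters. Beyond that, the argument is a one-line use of Galois transitivity combined with an elementary inequality in $\mathbb{C}$, and requires no analysis of the shape of the minimal polynomial of $\alpha$.
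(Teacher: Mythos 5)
Your proof is correct. Note that the paper does not prove this lemma at all: it is quoted as a known result of Smyth with a citation, so there is no in-paper argument to compare against. Your argument is the standard self-contained one: pick a conjugate $\alpha_j$ of maximal modulus $M$ under a fixed embedding into $\C$, transport the relation $2\alpha_1=\alpha_2+\alpha_3$ to $\alpha_j$ by Galois transitivity, and observe that the midpoint of two \emph{distinct} points of modulus at most $M$ has modulus strictly less than $M$. All the delicate points are handled: you verify $\alpha_2\neq\alpha_3$ (hence $\sigma(\alpha_2)\neq\sigma(\alpha_3)$, since $\sigma$ is injective), and you prove the strict midpoint inequality in the only nontrivial case $|a|=|b|$ via $\operatorname{Re}(a\bar b)<|a||b|$ for $a\neq b$. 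One small remark: in the context of this paper the lemma is also an immediate consequence of the more general Lemma~\ref{lemma4} (Dubickas), applied with $k_1=2$, $k_2=k_3=-1$ when $\alpha_1,\alpha_2,\alpha_3$ are pairwise distinct (the degenerate cases $\alpha_2=\alpha_3$ and $\alpha_1=\alpha_3$ being trivial, as you note for the first); your direct archimedean argument is what underlies that more general statement as well, so nothing is lost.
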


The following result is a generalization of Lemma $\ref{intro7}$ proved by Dubickas \cite{Dubickas2002}.

\begin{lemma}\label{lemma4}
If $\beta_{1}, \beta_{2},\dots , \beta_{n}$, where $n\geq 3$, are distinct algebraic numbers conjugate over a field of characteristic zero $K$ and $k_{1}, k_{2},\dots ,k_{n}$ are non-zero rational numbers satisfying $|k_{1}| \geq |k_{2}|+\dots+|k_{n}|$ then
\[
k_{1}\beta_{1} + k_{2}\beta_{2} +\dots+ k_{n}\beta_{n}\notin K.
\]
\end{lemma}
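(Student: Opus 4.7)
The plan is to assume for contradiction that $\gamma:=k_1\beta_1+\cdots+k_n\beta_n=c$ for some $c\in K$ and extract an impossible configuration by combining a Galois-averaging reduction with the equality case of the archimedean triangle inequality. Let $L$ denote the Galois closure of $K(\beta_1)/K$, set $G:=\mathrm{Gal}(L/K)$, and let $\beta_1,\ldots,\beta_d$ be the full $K$-orbit of $\beta_1$ (so that $d\ge n$). Throughout I may replace $k_i$ by $\mathrm{sign}(k_1)\,k_i$ and so assume $k_1>0$.

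The first step is to reduce to $c=0$. Applying any $\sigma\in G$ to $\sum_i k_i\beta_i=c$ gives $\sum_i k_i\sigma(\beta_i)=c$, and summing over $\sigma\in G$, where each conjugate $\beta_j$ of $\beta_i$ occurs with multiplicity $|G|/d$, yields the identity
\[
|G|\,c \;=\; \sum_{i=1}^{n} k_i\sum_{\sigma\in G}\sigma(\beta_i) \;=\; \frac{|G|}{d}\,\mathrm{tr}(\beta_1)\sum_{i=1}^{n} k_i,
\]
so $c=\bigl(\sum_i k_i\bigr)\mathrm{tr}(\beta_1)/d$. If $\sum_i k_i=0$ this forces $c=0$ automatically; otherwise the substitution $\beta_i\mapsto\beta_i-\mathrm{tr}(\beta_1)/d$ is a shift by an element of $K$ that preserves conjugacy, distinctness and the coefficients $k_i$, while reducing the relation to $\sum_i k_i\beta_i'=0$. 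In either case I may assume $c=0$.

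Now fix any embedding $\phi\colon L\hookrightarrow\mathbb{C}$, put $M:=\max_{1\le j\le d}|\phi(\beta_j)|>0$ (positive because the $\beta_j$ are distinct), pick $\beta_\ell$ attaining the maximum, and choose $\sigma\in G$ with $\sigma(\beta_1)=\beta_\ell$. Applying $\sigma$ to $\sum k_i\beta_i=0$ and solving for $k_1\beta_\ell$ gives
\[
k_1 M \;=\; \Bigl|\sum_{i=2}^{n} k_i\,\phi(\sigma(\beta_i))\Bigr| \;\le\; \sum_{i=2}^{n}|k_i|\,|\phi(\sigma(\beta_i))| \;\le\; \Bigl(\sum_{i=2}^{n}|k_i|\Bigr)M \;\le\; k_1 M,
\]
so equality must hold at every step. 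Equality at the last step forces $k_1=\sum_{i\ge 2}|k_i|$, while equality at the middle two steps forces $|\phi(\sigma(\beta_i))|=M$ for each $i\ge 2$ and requires the summands $k_i\phi(\sigma(\beta_i))$ to be nonnegative real multiples of a common unit vector $e^{\mathrm{i}\theta}$. A short computation then gives $\phi(\sigma(\beta_i))=\mathrm{sign}(k_i)\cdot Me^{\mathrm{i}\theta}$ for $i\ge 2$, and substituting back into $k_1\phi(\sigma(\beta_1))=-\sum_{i\ge 2}k_i\phi(\sigma(\beta_i))$ yields $\phi(\sigma(\beta_1))=-Me^{\mathrm{i}\theta}$.

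Thus all $n$ complex numbers $\phi(\sigma(\beta_1)),\ldots,\phi(\sigma(\beta_n))$ lie in the two-element set $\{Me^{\mathrm{i}\theta},-Me^{\mathrm{i}\theta}\}$; since $\sigma$ is a bijection on the $K$-conjugates, $\phi$ is injective on $L$, and the $\beta_i$ are distinct, these $n$ images must themselves be distinct, which is impossible once $n\ge 3$. I expect the main care to lie in the Galois-averaging reduction (correctly handling both $\sum_i k_i=0$ and $\sum_i k_i\ne 0$ in a single step) and in verifying that $\phi(\sigma(\beta_1))$ belongs to the same two-point set as $\phi(\sigma(\beta_2)),\ldots,\phi(\sigma(\beta_n))$; that last observation is precisely the pigeonhole collapse that turns the equality case into a contradiction with distinctness.
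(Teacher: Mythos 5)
The paper does not prove this lemma at all: it is quoted from Dubickas \cite{Dubickas2002} as a known generalization of Smyth's result (Lemma~\ref{intro7}), so there is no in-paper argument to measure yours against. Your proof is correct and self-contained. The averaging step is sound ($\mathrm{tr}(\beta_1)/d$ is fixed by $G$, hence lies in $K$, so the shift preserves conjugacy, distinctness and the coefficients while forcing $c=0$), and the maximum-modulus chain together with the equality case of the triangle inequality correctly pins each $\phi(\sigma(\beta_i))$, $i\ge 2$, to $\mathrm{sign}(k_i)\,Me^{\mathrm{i}\theta}$ and then $\phi(\sigma(\beta_1))$ to $-Me^{\mathrm{i}\theta}$, so that $n$ distinct conjugates would have to land in a two-point set --- impossible for $n\ge 3$; note also that when $|k_1|>|k_2|+\dots+|k_n|$ the chain already gives $k_1M<k_1M$ outright, so the equality analysis is only needed in the boundary case. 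The one point worth a sentence in a written-up version is the embedding $\phi\colon L\hookrightarrow\mathbb{C}$, which is not automatic for an arbitrary field $K$ of characteristic zero of large cardinality; but since the $\beta_i$ are algebraic numbers, the minimal polynomial of $\beta_1$ over $K$ has coefficients in $K\cap\overline{\mathbb{Q}}$, so one may replace $K$ by $K\cap\overline{\mathbb{Q}}$ without changing the conjugacy classes or the membership conclusion, after which everything lives inside $\overline{\mathbb{Q}}\subset\mathbb{C}$ and $\phi$ can be taken to be the identity on the number field generated by the conjugates.
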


Dubickas and Jankauskas in their paper \cite{DubickasJankauskas2015} proved the following result

\begin{lemma}\label{intro3}
The equality
\[
k_{1}\alpha_{1}+k_{2}\alpha_{2}+\cdots+k_{d}\alpha_{d} = 0
\]
with conjugates $\alpha_{1}, \alpha_{2},\dots, \alpha_{d}$ of an algebraic number $\alpha$ of degree $d$ over $\mathbb{Q}$ and $k_{1}, k_{2},\dots, k_{d}\in\mathbb{Z}$ satisfying $\sum_{i=1}^{d} k_{i}\neq 0$ can only hold if $tr(\alpha) := \alpha_{1} + \alpha_{2} + \cdots + \alpha_{d} = 0$.
\end{lemma}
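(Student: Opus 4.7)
The plan is to apply a Galois-averaging argument. Let $L$ be the normal closure of $\mathbb{Q}(\alpha)$ over $\mathbb{Q}$ and let $G=\mathrm{Gal}(L/\mathbb{Q})$. Since $\alpha_{1},\dots,\alpha_{d}$ are the $\mathbb{Q}$-conjugates of $\alpha$, every $\sigma\in G$ permutes the set $\{\alpha_{1},\dots,\alpha_{d}\}$, and this action is transitive. Write $\sigma(\alpha_{i})=\alpha_{\pi_{\sigma}(i)}$ for a permutation $\pi_{\sigma}\in S_{d}$. Applying $\sigma$ to the given identity yields
\[
\sum_{i=1}^{d} k_{i}\alpha_{\pi_{\sigma}(i)}=0, \qquad \text{i.e.,}\qquad \sum_{j=1}^{d} k_{\pi_{\sigma}^{-1}(j)}\,\alpha_{j}=0,
\]
which is a valid relation over $\mathbb{Q}$ for every $\sigma\in G$.

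Next I would sum these $|G|$ relations over all $\sigma\in G$. The resulting identity is
\[
\sum_{j=1}^{d}\Bigl(\sum_{\sigma\in G} k_{\pi_{\sigma}^{-1}(j)}\Bigr)\alpha_{j}=0.
\]
The key point is to identify the inner coefficient. For each fixed $j\in\{1,\dots,d\}$ and each $i\in\{1,\dots,d\}$, the number of $\sigma\in G$ with $\pi_{\sigma}^{-1}(j)=i$ equals the size of the stabilizer of $\alpha_{j}$ in $G$, namely $|G|/d$, because $G$ acts transitively on $\{\alpha_{1},\dots,\alpha_{d}\}$ (orbit–stabilizer). Hence the coefficient of each $\alpha_{j}$ is the same, namely $(|G|/d)\sum_{i=1}^{d} k_{i}$, and the displayed identity becomes
\[
\frac{|G|}{d}\Bigl(\sum_{i=1}^{d} k_{i}\Bigr)\bigl(\alpha_{1}+\alpha_{2}+\dots+\alpha_{d}\bigr)=0.
\]

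Since $|G|/d\neq 0$ and $\sum_{i=1}^{d} k_{i}\neq 0$ by hypothesis, one concludes $tr(\alpha)=\alpha_{1}+\dots+\alpha_{d}=0$, as required. I do not foresee a genuine obstacle here; the only subtlety is verifying that every fibre of the map $\sigma\mapsto\pi_{\sigma}^{-1}(j)$ has the same size $|G|/d$, which is an immediate consequence of the transitivity of the Galois action on the conjugates, together with orbit–stabilizer.
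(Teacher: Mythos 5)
Your proof is correct: applying every $\sigma\in G$ to the relation, summing over $G$, and using orbit--stabilizer (transitivity of $G$ on the conjugates) to see that each fibre of $\sigma\mapsto\pi_{\sigma}^{-1}(j)$ has size $|G|/d$ gives $\tfrac{|G|}{d}\bigl(\sum_{i=1}^{d}k_{i}\bigr)\,tr(\alpha)=0$, hence $tr(\alpha)=0$. The paper states this lemma without proof, citing Dubickas and Jankauskas \cite{DubickasJankauskas2015}; your Galois-averaging argument is the standard proof of that cited result, so it matches the intended approach and is complete as written.
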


The following result is a partial case of Theorem~1.3 in \cite{Weintraub2011}.

\begin{lemma}\label{lempe}
Suppose that $\alpha$ and $\beta$ are algebraic numbers over $\mathbb{Q}$ of degree $m$ and $n$, respectively. If $m$ and $n$ are coprime integers, then $\alpha+\beta$ is a primitive element of the compositum $\mathbb{Q}(\alpha,\beta)$, i.e., $\mathbb{Q}(\alpha,\beta)=\mathbb{Q}(\alpha+\beta)$. 
\end{lemma}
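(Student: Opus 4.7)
The plan is to prove $\mathbb{Q}(\alpha+\beta)=\mathbb{Q}(\alpha,\beta)$ by a short tower-law argument that uses coprimality of $m$ and $n$ twice. As a preliminary step, I would first establish the degree of the compositum: $[\mathbb{Q}(\alpha,\beta):\mathbb{Q}]=mn$. The upper bound $\le mn$ is immediate from $[\mathbb{Q}(\alpha,\beta):\mathbb{Q}(\alpha)]\le n$, and for the lower bound I note that both $m=[\mathbb{Q}(\alpha):\mathbb{Q}]$ and $n=[\mathbb{Q}(\beta):\mathbb{Q}]$ divide $[\mathbb{Q}(\alpha,\beta):\mathbb{Q}]$, so their least common multiple, which equals $mn$ by coprimality, divides it as well.

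Now set $L=\mathbb{Q}(\alpha+\beta)$ and $d=[\mathbb{Q}(\alpha,\beta):L]$. Since $\beta=(\alpha+\beta)-\alpha$ is already in $L(\alpha)$, we have $\mathbb{Q}(\alpha,\beta)=L(\alpha)$, so $d$ is exactly the degree over $L$ of the minimal polynomial of $\alpha$. That polynomial divides in $L[x]$ the minimal polynomial of $\alpha$ over $\mathbb{Q}$, which has degree $m$, and therefore $d\mid m$. The symmetric identity $\mathbb{Q}(\alpha,\beta)=L(\beta)$ gives $d\mid n$ in the same way. Coprimality of $m$ and $n$ then forces $d=1$, i.e.\ $L=\mathbb{Q}(\alpha,\beta)$, which is the conclusion of the lemma.

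There is essentially no obstacle in this argument; it is just the standard tower-law proof of primitivity. The only point worth a moment's care is the assertion that the minimal polynomial of $\alpha$ over the larger field $L$ divides the minimal polynomial of $\alpha$ over $\mathbb{Q}$, but this is immediate from the fact that $\alpha$ is still a root of the latter and the former is the monic generator of the annihilator ideal in $L[x]$. No Galois-theoretic machinery is needed, and the argument sidesteps the stronger statements in Weintraub's Theorem~1.3.
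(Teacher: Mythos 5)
Your setup is fine: the computation $[\mathbb{Q}(\alpha,\beta):\mathbb{Q}]=mn$, the identity $\mathbb{Q}(\alpha,\beta)=L(\alpha)=L(\beta)$ for $L=\mathbb{Q}(\alpha+\beta)$, and the reduction to showing $d:=[\mathbb{Q}(\alpha,\beta):L]=1$ are all correct. The gap is the step ``$d\mid m$''. From the fact that the minimal polynomial of $\alpha$ over $L$ divides the minimal polynomial $f$ of $\alpha$ over $\mathbb{Q}$ in $L[x]$ you may conclude only $d\le m$, not $d\mid m$: divisibility of polynomials says nothing about divisibility of their degrees ($x^2+1$ divides $x^3+x$, but $2\nmid 3$). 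The field-theoretic principle you are implicitly invoking, namely that $[L(\alpha):L]$ divides $[\mathbb{Q}(\alpha):\mathbb{Q}]$ for a subfield $L$ of the normal closure, is false in general: if $\alpha$ and $\alpha'$ are two distinct roots of an irreducible quartic $f$ with Galois group $S_4$ and $L=\mathbb{Q}(\alpha')$, then the minimal polynomial of $\alpha$ over $L$ is $f(x)/(x-\alpha')$, of degree $3$, and $3\nmid 4$. What your argument actually establishes is $d\mid mn$, $d\le m$ and $d\le n$, which does not force $d=1$ (for $(m,n)=(4,9)$ nothing here excludes $d=2$ or $d=3$).

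This is not a repairable slip in an otherwise routine argument; it is the entire content of the lemma. One must prove that $d$ divides both $m$ and $n$ (equivalently, that both $m$ and $n$ divide $[\mathbb{Q}(\alpha+\beta):\mathbb{Q}]$), and that is exactly the nontrivial part of Isaacs' theorem on degrees of sums in separable extensions, which requires a genuine Galois-theoretic or group-theoretic argument rather than the tower law alone. The paper itself offers no proof to compare against: it quotes the statement as a special case of Theorem~1.3 of Weintraub's paper. So your closing remark that ``no Galois-theoretic machinery is needed'' identifies precisely the point at which the proof breaks down.
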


To prove Proposition~\ref{prop562} and Theorem~\ref{444} we will need the following result.
\begin{proposition}\label{333}
Let $\alpha$ be an algebraic number of degree $d=6$  and $tr(\alpha)=0$. Suppose that some four distinct conjugates of $\alpha$ satisfy the relation
\[
\alpha_{1}+\alpha_{2}=\alpha_{3}+\alpha_{4}.
\]
Then either $\alpha_{1}+\alpha_{2}=0$ or $\alpha_{1}+\alpha_{2}$ is an algebraic number of degree $3$ and $tr(\alpha_{1}+\alpha_{2})=0$.
\end{proposition}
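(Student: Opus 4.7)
The plan is to set $\beta := \alpha_1 + \alpha_2 = \alpha_3 + \alpha_4$, so the trace-zero hypothesis forces $\alpha_5 + \alpha_6 = -2\beta$. The key preliminary observation is that the pairs $\{i,j\}$ with $\alpha_i + \alpha_j = \beta$ form a matching on $\{1,\dots,6\}$, since $\alpha_i + \alpha_j = \alpha_i + \alpha_k$ would force $\alpha_j = \alpha_k$. Consequently at most three such pairs exist, and if three exist they partition $\{1,\dots,6\}$ and give $3\beta = tr(\alpha) = 0$. Thus whenever the argument produces a third pair summing to $\beta$, the conclusion $\beta = 0$ follows immediately.

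In the case $\beta \in \mathbb{Q}$ with $\beta \neq 0$ I would derive a contradiction. Here only $\{1,2\}$ and $\{3,4\}$ sum to $\beta$, so no element of $\{5,6\}$ is matched. Because $\beta \in \mathbb{Q}$, every $\sigma \in G := \mathrm{Gal}(N/\mathbb{Q})$ (with $N$ the normal closure) fixes $\beta$; transitivity of $G$ on $\{\alpha_1,\dots,\alpha_6\}$ produces some $\sigma$ with $\sigma(\alpha_1) = \alpha_5$, and then $\alpha_5 + \sigma(\alpha_2) = \sigma(\beta) = \beta$ yields a third matched pair containing the index $5$, a contradiction. Hence $\beta = 0$.

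In the case $\beta \notin \mathbb{Q}$ let $n := \deg\beta$. Each Galois conjugate $\gamma = \sigma(\beta)$ is realised as a pair-sum in exactly two (disjoint) ways, namely $\{\sigma(1),\sigma(2)\}$ and $\{\sigma(3),\sigma(4)\}$ (a third realisation would force $\gamma = 0$, impossible since $\beta \neq 0$), so the set $A$ of pairs representing a conjugate of $\beta$ has cardinality $2n$. Lemma~\ref{lemma4} applied to $2\beta + (-2\beta) = 0 \in \mathbb{Q}$ (with dominant coefficient of absolute value $2$) shows that $-2\beta$ is not a Galois conjugate of $\beta$, so the $G$-orbit $O_2$ of $\{5,6\}$, whose pair-sums are precisely the conjugates of $-2\beta$, is disjoint from $A$. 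A short check yields $\mathrm{Stab}_G(\{5,6\}) = \mathrm{Stab}_G(\beta)$: the inclusion $\supseteq$ holds because any element fixing $\beta$ permutes the matched pairs $\{1,2\}$ and $\{3,4\}$ and hence fixes $\{5,6\}$ setwise, while $\subseteq$ holds because $\sigma(\{5,6\}) = \{5,6\}$ implies $\sigma(-2\beta) = -2\beta$. Therefore $|O_2| = n$.

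The endgame is a double count exploiting transitivity of $G$ on $\{1,\dots,6\}$. The number of pairs of $O_2$ containing a fixed index is independent of the index, so it equals $2|O_2|/6 = n/3$, which must be a nonnegative integer, forcing $3 \mid n$. The bound $|A| + |O_2| = 3n \leq 15$ together with $n \geq 2$ then gives $n = 3$. Applying the same double count to $A$ yields simultaneously $2\,tr(\beta) = \sum_{\{i,j\}\in A}(\alpha_i + \alpha_j) = (2n/3)\,tr(\alpha) = 0$, hence $tr(\beta) = 0$. I expect the stabilizer identity $\mathrm{Stab}_G(\{5,6\}) = \mathrm{Stab}_G(\beta)$ to be the subtlest point, since it is precisely what pins $|O_2|$ to exactly $n$ rather than a proper divisor; once this is in hand, the integrality constraint $3 \mid n$ and the bound $n \leq 5$ combine immediately to give the proposition.
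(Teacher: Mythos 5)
Your argument follows the paper's proof of Proposition~\ref{333} quite closely: the same reduction to $\alpha_5+\alpha_6=-2\beta$, the same treatment of the rational case (a transitive automorphism sending $\alpha_1$ to $\alpha_5$ manufactures a third matched pair, forcing $3\beta=0$), and in the irrational case the same three counting ingredients --- each conjugate of $\beta$ has exactly two disjoint pair-representations, so $2n$ pairs in all; transitivity forces every index to lie in the same number of pairs of a $G$-stable family, whence $3\mid n$; and the pairs representing the conjugates of $-2\beta$ must be added to the count against the bound $\binom{6}{2}=15$. Your packaging via the orbit of $\{5,6\}$ and the stabilizer identity $\mathrm{Stab}_G(\{5,6\})=\mathrm{Stab}_G(\beta)$ is a clean way to organize the last two steps (both inclusions check out), and you also supply the $tr(\beta)=0$ double count, which the paper's proof of the proposition leaves implicit.

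There is one defective step. You invoke Lemma~\ref{lemma4} to conclude that $-2\beta$ is not a conjugate of $\beta$, but that lemma requires at least three distinct conjugates in the relation, whereas the putative relation $2\beta+\beta'=0$ (with $\beta'=-2\beta$ conjugate to $\beta$) involves only two; for two conjugates the lemma's conclusion is false in general (e.g.\ $\sqrt{2}+(-\sqrt{2})=0$), so the hypothesis $n\geq 3$ cannot be waived. The fact you need is nonetheless true, and the repair is one line: if $\sigma(\beta)=-2\beta$ for some $\sigma$ in the Galois group, then since $\sigma$ fixes $-2\in\mathbb{Q}$ we get $\sigma^{m}(\beta)=(-2)^{m}\beta$ for all $m$, and taking $m$ to be the order of $\sigma$ gives $(-2)^{m}=1$, absurd (here $\beta\neq 0$ because $\beta\notin\mathbb{Q}$); equivalently, equality of the norms of $\beta$ and $-2\beta$ would force $(-2)^{n}=1$. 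To be fair, the paper's own proof asserts the corresponding disjointness of expressions without justification, so you are not alone in treating this point lightly --- but as written your citation does not prove it. With that substitution your proof is complete and essentially coincides with the paper's.
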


Denote $\pi := (1\ 2\ 5\ 4\ 3\ 6)$, $\sigma= \pi^4= (1\ 3\ 5)(2\ 6\ 4)$ and $\tau=(1\ 2)(3\ 4)(5\ 6)$ 
permutations of the symmetric group $S_6$. 
Theorem~\ref{555} is a corollary of the following proposition, which will also be used in the proof of Theorem~\ref{propqc}. 

\begin{proposition}\label{prop562}
Let $\alpha$ be an algebraic number of degree $6$ and $tr(\alpha)=0$. Suppose that some four distinct algebraic conjugates of $\alpha$ satisfy the relation
\[
\alpha_{1}+\alpha_{2}=\alpha_{3}+\alpha_{4}=:\beta\not\in\mathbb{Q}.
\]
Then $\beta$ is a cubic algebraic number and one can label the algebraic conjugates $\alpha_1,\alpha_2,\dotsc,\alpha_6$ of $\alpha$ in such a way that these satisfy the relations
\begin{equation}\label{eq:rel3}
\begin{cases}
\beta_{1} = \alpha_{1}+\alpha_{2} = \alpha_{3}+\alpha_{4},\\
\beta_{2} = \alpha_{2}+\alpha_{5} = \alpha_{3}+\alpha_{6},\\
\beta_{3} = \alpha_{1}+\alpha_{6} = \alpha_{4}+\alpha_{5},\\
\end{cases}
\end{equation}
where $\beta_1=\beta,\beta_2,\beta_3$ are the algebraic conjugates of $\beta$. 
 Let $G$ be the Galois group of the normal closure of $\mathbb{Q}(\alpha)$ over $\mathbb{Q}$. Assume that $G$ is a subgroup of $S_6$, acting on the indices of the conjugates $\alpha_1,\alpha_2,\dotsc,\alpha_6$ of $\alpha$. Then, given the relations \eqref{eq:rel3}, there are exactly three possible cases:
\begin{enumerate}
\item $G=\langle \tau, \pi\ |\ \tau^{2}=\pi^{6}=id,\ \tau\pi\tau = \pi^{5}\rangle\cong D_{6}$; 
\item $G=\langle \pi\ |\ \pi^{6}=id\rangle\cong C_{6}$;
\item $G = \{id, \sigma, \sigma^{2}, \tau, \tau\sigma, \tau\sigma^{2}\}\cong S_3$.
\end{enumerate}
\end{proposition}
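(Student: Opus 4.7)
The plan is to analyze the combinatorial structure forced on the conjugates of $\alpha$ by the relation $\alpha_{1}+\alpha_{2}=\alpha_{3}+\alpha_{4}=\beta\notin\mathbb{Q}$ and then to read off the possible Galois groups. By Proposition~\ref{333} and the assumption $\beta\notin\mathbb{Q}$, the number $\beta$ is cubic with $tr(\beta)=0$; write $\beta_{1}=\beta,\beta_{2},\beta_{3}$ for its conjugates. For each $j\in\{1,2,3\}$, choose $\rho\in G$ with $\rho(\beta_{1})=\beta_{j}$; applying $\rho$ to the two given identities shows that $\beta_{j}$ is realized as a sum of two disjoint pairs of conjugates of $\alpha$. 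Any other representation $\beta_{j}=\alpha_{a}+\alpha_{b}$ sharing an element with an existing pair would force equality of the pairs, so additional representations are disjoint from the previous ones; a third disjoint pair would partition $\{\alpha_{1},\dots,\alpha_{6}\}$ and give $3\beta_{j}=tr(\alpha)=0$, contradicting $\beta\notin\mathbb{Q}$. Hence each $\beta_{j}$ arises as a pair-sum in exactly two ways, producing six ``special'' pairs grouped into three classes of size two.

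View these six pairs as edges of a graph $\Gamma$ on the vertex set $\{\alpha_{1},\dots,\alpha_{6}\}$. The Galois group $G$ acts on $\Gamma$ as a vertex-transitive group of automorphisms (since $\alpha$ has degree~$6$), so all vertex degrees coincide, and by double counting each equals $2$. Thus $\Gamma$ is a $2$-regular simple graph on six vertices, so either $\Gamma\cong C_{6}$ or $\Gamma\cong C_{3}+C_{3}$. In the second case each triangle uses exactly one edge from each of the three classes, so its three edge-sums add to $\beta_{1}+\beta_{2}+\beta_{3}=0$; this forces $\alpha_{i}+\alpha_{j}+\alpha_{k}=0$ for the three vertices of that triangle, and combined with a class relation $\alpha_{i}+\alpha_{j}=\beta_{j_{0}}$ yields $\alpha_{k}=-\beta_{j_{0}}$, impossible since $\alpha_{k}$ has degree $6$ while $\beta_{j_{0}}$ has degree $3$. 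Therefore $\Gamma\cong C_{6}$.

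In $C_{6}$ two disjoint edges come in two isomorphism types depending on their cycle distance ($2$ or $3$, the latter meaning diametrically opposite). Since $\beta_{1},\beta_{2},\beta_{3}$ form a single $G$-orbit, $G$ permutes the three classes transitively, so all three classes are of the same type; a direct check (the distance-$2$ ``skip graph'' on the six edges of $C_{6}$ decomposes into two triangles, which admits no perfect matching) shows that the only partition of the edges of $C_{6}$ into three disjoint-edge pairs of a common type is the partition into opposite pairs. Labelling the vertices of $\Gamma$ cyclically in the order $\alpha_{1},\alpha_{2},\alpha_{5},\alpha_{4},\alpha_{3},\alpha_{6}$ makes $\pi=(1\ 2\ 5\ 4\ 3\ 6)$ the cyclic rotation and turns the three opposite-edge identifications into exactly the relations in \eqref{eq:rel3}. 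Finally, $G$ acts by automorphisms of this edge-labelled $C_{6}$, so $G$ is a transitive subgroup of $\operatorname{Aut}(C_{6})=D_{6}=\langle\pi,\tau\rangle$; inspection shows that the only transitive subgroups of $D_{6}$ on the six vertices are $D_{6}$, $C_{6}=\langle\pi\rangle$ and $S_{3}=\langle\pi^{2},\tau\rangle=\{id,\sigma,\sigma^{2},\tau,\tau\sigma,\tau\sigma^{2}\}$ (the $S_{3}$ generated by a through-vertex reflection such as $\pi\tau$ has orbits $\{\alpha_{1},\alpha_{3},\alpha_{5}\}$ and $\{\alpha_{2},\alpha_{4},\alpha_{6}\}$ and is intransitive). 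This yields the three cases of the proposition.

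The main obstacle is the passage from ``$\Gamma$ is some $2$-regular graph'' to ``$\Gamma=C_{6}$ with the opposite-edge pairing'': ruling out $C_{3}+C_{3}$ hinges on $\beta$ being cubic (so that $\alpha_{k}=-\beta_{j_{0}}$ produces a degree clash), and ruling out the mixed distance-$2$/distance-$3$ pairings of $C_{6}$ hinges on the $G$-transitivity of the conjugates $\beta_{j}$. Once both are in place, the Galois-group statement reduces to a routine inventory of the transitive subgroups of $D_{6}$.
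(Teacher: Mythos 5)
Your proof is correct, and while its opening (invoking Proposition~\ref{333} and the counting argument showing that each conjugate $\beta_{j}$ has exactly two representations as a pair-sum of conjugates of $\alpha$) coincides with the paper's, the rest takes a genuinely different route. The paper derives the relations \eqref{eq:rel3} by an explicit index-chasing case analysis that leans on the auxiliary identities $-2\beta_{k}=\alpha_{i}+\alpha_{j}$ together with the lemmas of Smyth and Dubickas to exclude bad configurations; it then bounds $\lvert \mathrm{Stab}(\alpha_{1})\rvert$, shows $\sigma\in G$, and finishes with a SageMath enumeration of transitive subgroups of $S_{6}$, discarding two spurious copies of $C_{6}$ by hand. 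You instead encode the six special pairs as a graph $\Gamma$: vertex-transitivity forces $2$-regularity; the trace identity $\beta_{1}+\beta_{2}+\beta_{3}=0$ combined with the degree clash $\deg(-\beta_{j})=3\neq 6$ kills $C_{3}+C_{3}$; and the parity obstruction in the distance-$2$ skip graph (two disjoint triangles have no perfect matching) forces the opposite-edge pairing on $C_{6}$, which after relabelling is exactly \eqref{eq:rel3}. The payoff is that $G\leq\operatorname{Aut}(C_{6})=D_{6}$ comes for free, so the group classification reduces to listing the three transitive subgroups of a dihedral group of order $12$ (and you correctly discard the vertex-reflection copy of $S_{3}$ as intransitive), with no stabilizer computation and no computer search; the paper's version is more elementary step by step but ultimately outsources the subgroup inventory to software. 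Two small points are worth making fully explicit in a polished write-up: the six pairs are pairwise distinct edges because a single pair $\alpha_{i}+\alpha_{j}$ cannot equal two distinct conjugates $\beta_{j}\neq\beta_{k}$, and automorphisms of the cycle graph preserve the cycle distance between edges, which is what legitimizes the ``common type'' argument for the three classes.
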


\section{Proofs}\label{proofs}

\begin{proof}[Proof of Theorem~\ref{111}]
(i) Suppose that some four distinct algebraic conjugates of an algebraic number $\alpha$ 
of degree $d\in\{4,5,6,7\}$ satisfy the relation $\alpha_{1}+\alpha_{2}+\alpha_{3}+\alpha_{4}=0$. 
The case $d=4$ is trivial in view of $tr(\alpha)=\alpha_{1}+\alpha_{2}+\alpha_{3}+\alpha_{4}$.  By Lemma $\ref{intro4}$, $d$ cannot be 5 or 7. Let $d=6$.  Lemma $\ref{intro3}$ implies that 
$tr(\al)=0$. Then $\alpha_5+\alpha_6 = tr(\alpha)-(\alpha_{1}+\alpha_{2}+\alpha_{3}+\alpha_{4})=0$. 
Hence, $\alpha_6=-\alpha_5$. Let $p(x)$ be the minimal polynomial of $\alpha$ over $\mathbb{Q}$. We have that $p(\alpha_6)=p(-\alpha_5)=0$. Hence, $\alpha_5$ is a root of $p(-x)$. Thus $p(x)$ divides the polynomial $p(-x)$. Since both polynomials $p(x)$ and $p(-x)$ are of the same degree and and their constant terms coincide, we have that $p(-x)=p(x)$. So $p(x)$ is of the form 
\[
x^{6}+ax^{4}+bx^{2}+ c \in\mathbb{Q}[x].
\]
The converse is clear, since the roots $\alpha_1,\alpha_2,\dotsc, \alpha_6$ of such polynomial  satisfy
\[
\alpha_{1}=-\alpha_{2},\quad \alpha_{3}=-\alpha_{4},\quad \alpha_{5}=-\alpha_{6}.
\]
Thus, $\alpha_{1}+\alpha_{2}+\alpha_{3}+\alpha_{4}=0$.  

(ii) Suppose that some four distinct algebraic conjugates of an algebraic number $\alpha$ 
of degree $d\in\{4,5,6,7\}$ satisfy the relation $\alpha_{1}+\alpha_{2}+\alpha_{3}=\alpha_{4}$. 
If $d=4$, then, by Lemma~\ref{intro3}, $tr(\alpha)=\alpha_{1}+\alpha_{2}+\alpha_{3}+\alpha_{4} = 0$ and we obtain $\alpha_4 + \alpha_4 = (\alpha_{1}+\alpha_{2}+\alpha_{3}) + \alpha_4 =0$. A contradiction. By Lemma $\ref{intro4}$, $d$ cannot be 5 or 7. Hence, $d=6$. By Lemma~\ref{intro3}, $tr(\alpha)=\alpha_{1}+\dotsb+\alpha_{6} = 0$. Since $\alpha_{1}+\alpha_{2}+\alpha_{3}=\alpha_{4}$, we have that
\[
0=\alpha_{1}+\dotsb+\alpha_{6} = 2\alpha_4+\alpha_5+\alpha_6,
\]
which is impossible in view of Lemma~\ref{lemma4}.
\end{proof}

\begin{proof}[Proof of Proposition~\ref{333}] Let $\alpha$ be an algebraic number of degree $d=6$  such that  $tr(\alpha)=0$ and some four distinct conjugates of $\alpha$ satisfy the relation
\[
\alpha_{1}+\alpha_{2}=\alpha_{3}+\alpha_{4}=:\beta.
\]
Let 

\[
\mathcal{S}:=\{\alpha_{1}, \alpha_{2}, \alpha_{3}, \alpha_{4}, \alpha_{5}, \alpha_{6}\}
\] 
be the full set of algebraic conjugates of $\alpha$. Then, in view of $tr(\alpha)=0$, we have
\[
\alpha_{1}+\alpha_{2}=\alpha_{3}+\alpha_{4}=\beta,\quad \alpha_{5}+\alpha_{6} = -2\beta.
\] 


Let $G$ be the Galois group of the normal closure of $\mathbb{Q}(\alpha_{1})$ over $\mathbb{Q}$. The group $G$ is determined (in a unique way) by its action on $\mathcal{S}$: it corresponds to some transitive subgroup of the full symmetric group $S_{6}$. First consider the trivial case:
\[
\alpha_{1}+\alpha_{2}=\alpha_{3}+\alpha_{4}=a,\quad \alpha_{5}+\alpha_{6} = -2a,
\]
where $a\in\mathbb{Q}$. Select an automorphism $\phi\in G$ that maps $\alpha_{1}$ to $\alpha_{5}$. Setting $\phi(\alpha_{2}) = \alpha_{k}$, we obtain $\alpha_{5} + \alpha_{k} = a$. We claim that $k=6$. Indeed, if $1\leq k\leq 2$, then $\alpha_{1}+\alpha_{2}=a$ together with $\alpha_{5} + \alpha_{k} = a$ imply $\alpha_{5}=\alpha_{1}$ or $\alpha_{5}=\alpha_{2}$, which is impossible. Similarly, if $3\leq k\leq 4$, then $\alpha_{3}+\alpha_{4}=a$ together with $\alpha_{5} + \alpha_{k} = a$ imply $\alpha_{5}=\alpha_{3}$ or $\alpha_{5}=\alpha_{4}$, and we get another contradiction. Clearly, $k\neq 5$, so the only option is $\alpha_{5} + \alpha_{6} = a$. But we already know that $\alpha_{5}+\alpha_{6} = -2a$. Thus $a=-2a$, meaning that $a=0$. 

Now assume that 
\[
\alpha_{1}+\alpha_{2}=\alpha_{3}+\alpha_{4}=\beta,\quad \alpha_{5}+\alpha_{6} = -2\beta,
\]
where $\beta\notin\mathbb{Q}$. We will prove that $\beta_{1} := \beta$ is a cubic algebraic number. 

Let us write all possible distinct expressions of $\beta_{1}$ in terms of $\alpha_{i}+\alpha_{j}$ (sum of two distinct $\alpha$ conjugates). Assume that there are exactly $l$ distinct expressions (two expressions $\alpha_i+\alpha_j$ and $\alpha_u+\alpha_v$ are distinct if $\{i,j\}\neq \{u,v\}$):
\begin{equation*}
\beta_{1} = \alpha_{1}+\alpha_{2} = \alpha_{3}+\alpha_{4} =\alpha_u+\alpha_v= \dots.
\end{equation*}
Notice that $l\geq 2$, since the equality $\alpha_{1}+\alpha_{2} = \alpha_{3}+\alpha_{4}$ provides at least two distinct expressions of $\beta_{1}$. We will show that $l=2$. Indeed, assume that $l\geq 3$. Then we have at least three distinct expressions of $\beta_1$ as a sum of two distinct conjugates of $\alpha$:
\begin{equation}\label{eq5a}
\beta_{1} = \alpha_{1}+\alpha_{2} = \alpha_{3}+\alpha_{4} =\alpha_u+\alpha_v.
\end{equation}
Then $\{u,v\} =\{5,6\}$.  Indeed, if $u\in\{1,2,3,4\}$, then \eqref{eq5a} implies that $\alpha_v$ coincides with one of the conjugates $\alpha_1$, $\alpha_2$, $\alpha_3$, $\alpha_4$, which is impossible, since all three expressions in \eqref{eq5a} are distinct. Similarly, $v\in\{1,2,3,4\}$ also leads to a contradiction. 
Hence, $\{u,v\}=\{5,6\}$. So \eqref{eq5a} becomes
\[
\beta_{1} = \alpha_{1}+\alpha_{2} = \alpha_{3}+\alpha_{4} =\alpha_5+\alpha_6.
\]
Adding all these expressions of $\beta_1$, we obtain
\[
3\beta_{1} = \alpha_{1}+\alpha_{2} + \alpha_{3}+\alpha_{4} + \alpha_{5}+\alpha_{6} = 0 ,
\]
which is impossible in view of $\beta_{1}\notin\mathbb{Q}$.

Now we have that $l=2$ and  
\begin{equation}\label{eq373}
\beta_{1} = \alpha_{1}+\alpha_{2} = \alpha_{3}+\alpha_{4}.
\end{equation}
Next, we will obtain an upper bound for $\deg(\beta_{1})$. Note that by acting on \eqref{eq373} with an appropriate automorphism from $G$ we can obtain expression of the form \eqref{eq373} for every algebraic conjugate of $\beta_1$:
\begin{equation}\label{eq377}
\begin{split}
\beta_{1} &= \alpha_{1}\phantom{a}+\alpha_{2}\;\; = \alpha_{3}\phantom{a}+\alpha_{4},\\
\beta_{2} &= \alpha_{i_{21}}+\alpha_{i_{22}} = \alpha_{i_{23}}+\alpha_{i_{24}},\\
               &= \dotsb\;\;\;\dotsb\\
\beta_t &= \alpha_{i_{t1}}+\alpha_{i_{t2}} = \alpha_{i_{t3}}+\alpha_{i_{t4}}.
\end{split}
\end{equation}
Here $t$ is the degree of $\beta_1$  and $\beta_1,\beta_2,\dotsc, \beta_t$ are the algebraic conjugates of $\beta_1$. 
We have precisely $2\cdot t$ distinct expressions of the form $\alpha_i+\alpha_j$ in \eqref{eq377}, since there are exactly $t$ algebraic conjugates of $\beta_1$ and every such conjugate has exactly two expressions.  On the other hand, since $\deg(\alpha)=6$, we have at most $\binom{6}{2} = 15$ possible pairs of indices for distinct expressions $\alpha_{i}+\alpha_{j}$. Hence,
$ 2t\leq 15$ and $t=\deg(\beta_1)\leq 7$. 

Next, we will show that, in fact, $\deg(\beta_{1})$ is divisible by $3$. Indeed, in \eqref{eq377} there are 
$2t$ distinct expressions of conjugates of $\beta_{1}$ as sums $\alpha_{i}+\alpha_{j}$. Each such sum contains two conjugates of $\alpha$. Hence there are exactly $2\cdot 2t$ appearances of $\alpha$ conjugates in \eqref{eq377}. On the other hand, since $G$ is transitive on the set of algebraic conjugates of $\alpha$, each $\alpha_{i}$ must appear the same number of times in 
\eqref{eq377}. Suppose that every $\alpha_{i}$ appears exactly $k$ times in  \eqref{eq377}. So 
we have exactly $k\cdot \deg(\alpha)=6k$ appearances of $\alpha$ conjugates in \eqref{eq377}. Hence, $4t=6k$, and therefore $t$ is divisible by 3. Recall that $t=\deg(\beta_1)\leq 7$. 
So $\deg(\beta_1)=3$ or 6. 


Finally, we will show that $\deg(\beta_{1})\neq 6$. Indeed, assume that $\deg(\beta_{1})=6$. Since each conjugate of $\beta_{1}$  has exactly $2$ distinct expressions of the form $\alpha_{i}+\alpha_{j}$, we obtain $6\cdot 2=12$ distinct expressions. Recall that there are at most $\binom{6}{2} = 15$ possible pairs of indices for distinct expressions $\alpha_{i}+\alpha_{j}$ and also
\[
-2\beta_{1} = -(\alpha_{1}+\alpha_{2}) - (\alpha_{3}+\alpha_{4}) = \alpha_{5}+\alpha_{6}.
\]
By applying all automorphisms from $G$ to $-2\beta_{1} = \alpha_{5}+\alpha_{6}$, we get at least $\deg(\beta_{1})=6$ expressions of the form $\alpha_{i}+\alpha_{j}$ for algebraic conjugates of $-2\beta_{1}$. These expressions must be distinct from each other and from $12$ expressions that we already have. But in such a case, there are
\[
12+6 = 18
\]
distinct expressions $\alpha_{i}+\alpha_{j}$. A contradiction, since there are at most $15$ distinct such expressions. Hence, $\deg(\beta_{1})\neq 6$, and the only possibility is $\deg(\beta_{1})=3$. 
\end{proof}

\begin{proof}[Proof of Proposition~\ref{prop562}] Let $\alpha$ be an algebraic number of degree $d=6$  and $tr(\alpha)=0$. Assume that some four distinct algebraic conjugates of $\alpha$ satisfy the relation
\begin{equation}\label{eq:555lr}
\alpha_{1}+\alpha_{2}=\alpha_{3}+\alpha_{4}=:\beta\not\in\mathbb{Q}.
\end{equation} 
Then, by Proposition~\ref{333}, $\beta$ is a cubic algebraic number. Let 
$\beta_1=\beta,\beta_2,\beta_3$ be the algebraic conjugates of $\beta$. 
Let $G$ be the Galois group of the normal closure of $\mathbb{Q}(\alpha)$ over $\mathbb{Q}$. Assume that $G$ is a subgroup of $S_6$, acting on the indices of the conjugates $\alpha_1,\alpha_2,\dotsc,\alpha_6$ of $\alpha$. Take two automorphisms of $G$ such that one maps $\beta_1$ to $\beta_2$ and another maps $\beta_1$ to $\beta_3$. Acting with these automorphisms on \eqref{eq:555lr}, we obtain
\begin{equation}\label{eq20}
\begin{cases}
\beta_{1} = \alpha_{1}+\alpha_{2} = \alpha_{3}+\alpha_{4},\\
\beta_{2} = \alpha_{i_{21}}+\alpha_{i_{22}} = \alpha_{i_{23}}+\alpha_{i_{24}},\\
\beta_{3} = \alpha_{i_{31}}+\alpha_{i_{32}} = \alpha_{i_{33}}+\alpha_{i_{34}},
\end{cases}
\end{equation}
where each $\alpha_{i_{kl}}$ is an algebraic conjugate of $\alpha$. In view of  $tr(\alpha)=0$, we also obtain  corresponding relations
\begin{equation}\label{eq21}
\begin{cases}
-2\beta_{1} = \alpha_{5}+\alpha_{6},\\
-2\beta_{2} = \alpha_{i_{25}}+\alpha_{i_{26}},\\
-2\beta_{3} = \alpha_{i_{35}}+\alpha_{i_{36}}.\\
\end{cases}
\end{equation}\\
Note that for every $k=2,3$ the numbers $i_{k1},i_{k2}, i_{k3}, i_{k4}, i_{k5}, i_{k6}$ are distinct. 
We will specify the indices in $(\ref{eq20})$ and $(\ref{eq21})$ by relabeling the conjugates $\alpha_1,\alpha_2,\dotsc,\alpha_6$, if necessary. First, we will prove that $-2\beta_{k}$ has a unique expression in terms of $\alpha_i+\alpha_j$ (recall that two expressions $\alpha_i+\alpha_j$ and $\alpha_u+\alpha_v$ are distinct if $\{i,j\}\neq \{u,v\}$). Indeed, say, $-2\beta_{1}$ has two distinct expressions:
\begin{equation}\label{eq97}
-2\beta_{1} = \alpha_{5}+\alpha_{6} = \alpha_{u}+\alpha_{v}.
\end{equation}
If $u=5$ (or $u=6$), then by $(\ref{eq97})$, $v=6$ (or $v=5$, respectively). In this scenario, the expressions  $\alpha_{u}+\alpha_{v}$ and $\alpha_{5}+\alpha_{6}$ become identical. This implies that $u\notin\{5,6\}$. A similar argument shows that $v\notin\{5,6\}$. Consequently, $u, v\in\{1, 2, 3, 4\}$. Without loss of generality, let $u=1$. We then examine the following cases for $(u,v)$: 
\begin{itemize}
\item[Case 1:] If $(u,v)=(1,1)$, then $\alpha_{5}+\alpha_{6} = 2\alpha_{1}$, which contradicts Lemma~$\ref{intro7}$.
\item[Case 2:] If $(u,v)=(1,2)$, then equations \eqref{eq20} and \eqref{eq97} yield $-2\beta_{1}=\beta_{1}$ implying $\beta_{1}=0$. This contradicts the condition that $\beta_{1}\notin\mathbb{Q}$.
\item[Case 3:] If $(u,v)=(1,3)$, then we have $\beta_{1} = \alpha_{1}+\alpha_{2}$ and $-2\beta_{1} = \alpha_{1} + \alpha_{3}$. Substituting the first into the second gives $3\alpha_1+2\alpha_2+\alpha_3=0$, which contradicts Lemma $\ref{lemma4}$.
\item[Case 4:] Similarly, $(u,v)\neq(1,4)$.
\end{itemize}
These cases imply that  $u\notin\{1, 2, 3, 4\}$. Therefore, $-2\beta_{1}$ has a unique expression in terms of $\alpha_i+\alpha_j$. Since $\beta_1,\beta_2,\beta_3$ are algebraic conjugates of $\beta_1$, it follows that 
 every $-2\beta_{k}$ (for $k=1,2,3$) also has a unique expression in terms of $\alpha_i+\alpha_j$.  

Now we will prove that all the $\alpha$'s appear exactly once in $(\ref{eq21})$. Indeed, assume that some conjugate of $\alpha$ appears at least twice in $(\ref{eq21})$. 
Without loss of generality, we may assume that $\alpha_{i_{25}}=\alpha_5$, i.e., $-2\beta_{2} = \alpha_{5}+\alpha_{i_{26}}$. 
Recall that for every $k=2,3$ the numbers $i_{k1},i_{k2}, i_{k3}, i_{k4}, i_{k5}, i_{k6}$ are distinct. Therefore, $\alpha_5$ does not appear in the expressions of $\beta_1$ and $\beta_2$ in \eqref{eq20}. 
Moreover, from the proof of Proposition~\ref{333} we know that every algebraic conjugate of $\alpha$ appears exactly twice in \eqref{eq20}. Consequently, $\alpha_5$ appears twice in the expression of $\beta_3$ in \eqref{eq20}. This means that the numbers $i_{31},i_{32}, i_{33}, i_{34}$ are not distinct, a contradiction. 
Hence, all the $\alpha$'s appear exactly once in $(\ref{eq21})$.



We have that $\{\alpha_{i_{25}}, \alpha_{i_{26}}, \alpha_{i_{35}}, \alpha_{i_{36}}\}=\{\alpha_{1}, \alpha_{2}, \alpha_{3}, \alpha_{4}\}$. Without loss of generality, we can assume that $\alpha_{i_{25}} = \alpha_{1}$. 
If $\alpha_{i_{26}} = \alpha_{2}$, then $-2\beta_2=\beta_1$, which contradicts Lemma~\ref{lemma4}. Hence, $\alpha_{i_{26}}\in\{\alpha_{3}, \alpha_{4}\}$. 
Note that $\alpha_{3}$ and $\alpha_{4}$ appear symmetrically in the first equation of $(\ref{eq20})$. 
Without loss of generality, by relabeling $\alpha_{3}$ and $\alpha_{4}$, if necessary, we can assume that $\alpha_{i_{26}} = \alpha_{4}$.  
From this, we immediately derive that $\{\alpha_{i_{21}}, \alpha_{i_{22}}, \alpha_{i_{23}}, \alpha_{i_{24}}\}=\{\alpha_{2}, \alpha_{3}, \alpha_{5}, \alpha_{6}\}$. 
Note that $\beta_{2}\neq\alpha_{5}+\alpha_{6}$. 
Indeed, if $\beta_{2}=\alpha_{5}+\alpha_{6}$, then $\beta_{2}=-2\beta_{1}$, which contradicts Lemma~\ref{lemma4}.  
Thus $\alpha_{5}$ and $ \alpha_{6}$ appear in distinct expressions of $\beta_{2}$ in \eqref{eq20}, as well as $\alpha_{2}$ and $\alpha_{3}$. 
Without loss of generality, we can assume that $\alpha_{i_{21}} = \alpha_{2}$ and $\alpha_{i_{23}} = \alpha_{3}$. 
Since $\alpha_{5}$ and $\alpha_{6}$ appear symmetrically in the first equation of $(\ref{eq21})$, by relabeling $\alpha_{5}$ and $\alpha_{6}$, if necessary, we can assume that $\alpha_{i_{22}} = \alpha_{5}$ and $\alpha_{i_{24}} = \alpha_{6}$. So far, we have obtained
\begin{equation*}
\begin{cases}
\beta_{1} = \alpha_{1}+\alpha_{2} = \alpha_{3}+\alpha_{4},\\
\beta_{2} = \alpha_{2}+\alpha_{5} = \alpha_{3}+\alpha_{6},\\
\beta_{3} = \alpha_{i_{31}}+\alpha_{i_{32}} = \alpha_{i_{33}}+\alpha_{i_{34}},
\end{cases}
\begin{cases}
-2\beta_{1} = \alpha_{5}+\alpha_{6},\\
-2\beta_{2} = \alpha_{1}+\alpha_{4},\\
-2\beta_{3} = \alpha_{i_{35}}+\alpha_{i_{36}}.\\
\end{cases}
\end{equation*}
Since $\{\alpha_{i_{35}}, \alpha_{i_{36}}\}=\{\alpha_{2}, \alpha_{3}\}$, without loss of generality, we 
assume that $\alpha_{i_{35}} = \alpha_{2}$ and $\alpha_{i_{36}} = \alpha_{3}$. Then $\{\alpha_{i_{31}}, \alpha_{i_{32}}, \alpha_{i_{33}}, \alpha_{i_{34}}\}=\{\alpha_{1}, \alpha_{4}, \alpha_{5}, \alpha_{6}\}$. 
Note that $\beta_{3}\neq\alpha_{5}+\alpha_{6}$. Indeed, if $\beta_{3}=\alpha_{5}+\alpha_{6}$, then $\beta_{3}=-2\beta_{1}$, which contradicts Lemma~\ref{lemma4}. Therefore,  $\alpha_{5}$ and $ \alpha_{6}$ appear in distinct expressions of $\beta_{3}$ in \eqref{eq20}, as well as $\alpha_{1}$ and $\alpha_{4}$. 
Thus, without loss of generality, we can assume that $\alpha_{i_{31}} = \alpha_{1}$ and $\alpha_{i_{33}} = \alpha_{4}$. Now we have two possible cases: 
\[
\beta_{3} = \alpha_{1}+\alpha_{5} = \alpha_{4}+\alpha_{6}\;\; \text{or}\;\; \beta_{3} = \alpha_{1}+\alpha_{6} = \alpha_{4}+\alpha_{5}.
\]
The first case is impossible. Indeed, by adding $\beta_{1} = \alpha_{1}+\alpha_{2}, \beta_{2} = \alpha_{2}+\alpha_{5}$ and $\beta_{3} = \alpha_{1}+\alpha_{5}$, we obtain
\[
0 = \beta_{1} + \beta_{2} + \beta_{3} = 2(\alpha_{1}+ \alpha_{2}+\alpha_{5})=2(\beta_{1}+\alpha_{5}),
\]
and hence $\beta_{1}=-\alpha_{5}$, which is impossible, since $6=\deg(-\alpha_{5})\neq\deg(\beta_{1})=3$. 
Finally, we can rewrite equations \eqref{eq20} and \eqref{eq21} as follows
\begin{equation}\label{eq24}
\begin{cases}
\beta_{1} = \alpha_{1}+\alpha_{2} = \alpha_{3}+\alpha_{4},\\
\beta_{2} = \alpha_{2}+\alpha_{5} = \alpha_{3}+\alpha_{6},\\
\beta_{3} = \alpha_{1}+\alpha_{6} = \alpha_{4}+\alpha_{5},
\end{cases}
\end{equation}
\begin{equation}\label{eq25}
\begin{cases}
-2\beta_{1} = \alpha_{5}+\alpha_{6},\\
-2\beta_{2} = \alpha_{1}+\alpha_{4},\\
-2\beta_{3} = \alpha_{2}+\alpha_{3}.\\
\end{cases}
\end{equation}\\

Now, we will prove that the Galois group $G$ is of order $|G| = 6$ or $12$. 
Since $G$ is transitive, Orbit-Stabilizer Theorem implies
\[
|G| =  |Orb(\alpha_{1})|\cdot |Stab(\alpha_{1})| = 6\cdot |Stab(\alpha_{1})|,
\]
where $Stab(\alpha_1) = \{\psi\in G:\psi(\alpha_1)=\alpha_1\}$. Hence, if we can show that 
$|Stab(\alpha_{1})|\leq 2$, then it follows that $|G|=6$ or 12.
 Let $\psi\in Stab(\alpha_{1})$.

If $\psi$ stabilizes $\alpha_{2}$, then, in view of $-2\beta_{2} = \alpha_{1}+\alpha_{4}$ and 
$-2\beta_{3} = \alpha_{2}+\alpha_{3}$, $\psi$ stabilizes $\beta_2$, $\alpha_3$ and $\alpha_4$. 
Since $\beta_2=\alpha_2+\alpha_5$, it follows that $\psi$ stabilizes $\alpha_5$. Hence, $\psi$ stabilizes every conjugate of $\alpha$. Therefore, $\psi=id$.


Suppose that $\psi(\alpha_2)\neq \alpha_2$. Since $\psi(\alpha_1)=\alpha_1$ and every conjugate of $\alpha$ appears exactly once in \eqref{eq25}, it follows that $\psi$ stabilizes $\beta_2$ and $\alpha_4$. 
Moreover, from \eqref{eq24} we obtain that $\psi$ maps $\beta_1=\alpha_1+\alpha_2$ to $\beta_3=\alpha_1+\alpha_6$ and vice versa. 
Hence, $\psi(\alpha_2)=\alpha_6$ and $\psi(\alpha_6)=\alpha_2$. Furthermore, $\psi$ maps  
$-2\beta_1=\alpha_5+\alpha_6$ to $-2\beta_3=\alpha_2+\alpha_3$ and vice versa. 
So $\psi(\alpha_3)=\alpha_5$ and $\psi(\alpha_5)=\alpha_3$. Hence, $\psi=(2\,6)(3\,5)\in S_6$. 

We have proved that if every $\psi\in Stab(\alpha_1)$ stabilizes $\alpha_2$, then the stabilizer 
subgroup $Stab(\alpha_1)$ is trivial and $G$ has order 6. 
If there exists $\psi\in Stab(\alpha_1)$ which does not stabilize $\alpha_2$, then 
$Stab(\alpha_1)=\{id,(2\,6)(3\,5)\}$ and accordingly $G$ has order $6\cdot 2=12$. 
Hence, if $G$ has order 12, then necessarily $(2\,6)(3\,5)\in G$.


Next we will prove that the group $G$ contains the permutation $\sigma := (1\ 3\ 5)$ $(2\ 6\ 4)\in S_6$. Indeed, since $\beta_{1}$ is a cubic algebraic number, the Galois group $G$ contains an element, denote it by $\varphi$,
that permutes the conjugates of $\beta_{1}$.  
Without loss of generality, we can assume that
\[
\varphi(\beta_{1})=\beta_{2},\quad \varphi(\beta_{2})=\beta_{3},\quad \varphi(\beta_{3})=\beta_{1}
\]
(by exchanging $\varphi$ with $\varphi^2$, if necessary).

Relations in $(\ref{eq25})$ imply that $\varphi$ maps $\{\alpha_{1}, \alpha_{4}\}$ to $\{\alpha_{2}, \alpha_{3}\}$. Consider two possible cases: $\varphi(\alpha_{1})=\alpha_{2}$ and 
$\varphi(\alpha_{1})=\alpha_{3}$.

If $\varphi(\alpha_{1})=\alpha_{2}$, then $\varphi(\alpha_{4})=\alpha_{3}$. The expressions of $\beta_{1}$ and $\beta_{2}$ in $(\ref{eq24})$ imply that $\varphi$ maps $\{\alpha_{1}, \alpha_{2}\}$ to $\{\alpha_{2}, \alpha_{5}\}$. Since $\varphi(\alpha_{1})=\alpha_{2}$, we obtain $\varphi(\alpha_{2})=\alpha_{5}$. Similarly, we see that $\varphi$ maps $\{\alpha_{3}, \alpha_{4}\}$ to $\{\alpha_{3}, \alpha_{6}\}$. Since $\varphi(\alpha_{4})=\alpha_{3}$, we derive $\varphi(\alpha_{3})=\alpha_{6}$. The expressions of $\beta_{2}$ and $\beta_{3}$ in $(\ref{eq24})$ imply that $\varphi$ maps $\{\alpha_{2}, \alpha_{5}\}$ to $\{\alpha_{4}, \alpha_{5}\}$. Since $\varphi(\alpha_{2})=\alpha_{5}$, we obtain $\varphi(\alpha_{5})=\alpha_{4}$. We are left with only one option for $\varphi(\alpha_6)$, i.e., $\varphi(\alpha_6)=\alpha_1$.  
Hence, $\varphi = (1\ 2\ 5\ 4\ 3\ 6)$. Note that $\varphi^4=(1\ 3\ 5)(2\ 6\ 4)=\sigma$. 
So that in this case ($\varphi(\alpha_1)=\alpha_2$)  the permutation $\sigma$ is contained in $G$.

If $\varphi(\alpha_{1})=\alpha_{3}$, then $\varphi(\alpha_{4})=\alpha_{2}$. The expressions of $\beta_{1}$ and $\beta_{2}$ in $(\ref{eq24})$ imply that $\varphi$ maps $\{\alpha_{1}, \alpha_{2}\}$ to $\{\alpha_{3}, \alpha_{6}\}$. Since $\varphi(\alpha_{1})=\alpha_{3}$, we have $\varphi(\alpha_{2})=\alpha_{6}$. Similarly we see that $\varphi$ maps $\{\alpha_{3}, \alpha_{4}\}$ to $\{\alpha_{2}, \alpha_{5}\}$. Since $\varphi(\alpha_{4})=\alpha_{2}$, we get $\varphi(\alpha_{3})=\alpha_{5}$. The expressions of $\beta_{2}$ and $\beta_{3}$ in $(\ref{eq24})$ imply that $\varphi$ maps $\{\alpha_{2}, \alpha_{5}\}$ to $\{\alpha_{1}, \alpha_{6}\}$. Since $\varphi(\alpha_{2})=\alpha_{6}$, we obtain $\varphi(\alpha_{5})=\alpha_{1}$. 
We are left with only one option for $\varphi(\alpha_6)$, i.e., $\varphi(\alpha_6)=\alpha_4$.
Hence, $\varphi = (1\ 3\ 5)(2\ 6\ 4)=\sigma$.

We have proved that the group $G$ contains the permutation $\sigma = (1\ 3\ 5)(2\ 6\ 4)$. Now we are in a position to find all possible groups $G$. 

A simple computation with SageMath \cite{sagemath} shows that there is a unique transitive subgroup of $S_6$ which has order 12 and contains permutations $(2\,6)(3\,5)$ and 
$\sigma=(1\ 3\ 5)(2\ 6\ 4)$. 
This subgroup is generated by the permutations $\tau = (1\,2)(3\,4)(5\,6)$ and 
$\pi = (1\, 2\, 5\, 4\, 3\, 6)$ and is isomorphic to the dihedral group $D_6$ of order 12. 

Similarly, there are exactly four transitive subgroups of $S_6$ which have order 6 and contain the permutation $\sigma=(1\ 3\ 5)(2\ 6\ 4)$. These are
\begin{itemize}
\item $G_1=\langle \sigma,\tau \rangle$ isomorphic to $S_3$, where $\tau = (1\,2)(3\,4)(5\,6)$;
\item $G_2=\langle \pi \rangle$ isomorphic to the cyclic group $C_6$, where $\pi=(1\, 2\, 5\, 4\, 3\, 6)$;
\item $G_3=\langle (1\,6\,5\,2\,3\,4) \rangle$ isomorphic to the cyclic group $C_6$;
\item $G_4=\langle (1\,4\,5\,6\,3\,2) \rangle$ isomorphic to the cyclic group $C_6$.
\end{itemize}
Note that the groups $G_3$ and $G_4$ do not preserve the relations in \eqref{eq25}. Indeed, the generator of $G_3$ maps $\alpha_1+\alpha_4$ to $\alpha_6+\alpha_1$ while the 
generator of $G_4$ maps $\alpha_1+\alpha_4$ to $\alpha_4+\alpha_5$. 
Hence, $G\neq G_3$ and $G\neq G_4$.

We have proved that there are three options for the group $G$:
\begin{enumerate}
\item $G=\langle \tau, \pi\rangle\cong D_{6}$; 
\item $G=\langle \sigma,\tau\ \rangle\cong S_3$;
\item $G = \langle \pi\ \rangle\cong C_6$.
\end{enumerate}
This complete the proof of Proposition~\ref{prop562}.

\end{proof}

\begin{proof}[Proof of Theorem~\ref{propqc}]
\textit{Necessity.} Suppose that $\alpha$ is an algebraic number of degree 6  whose four distinct algebraic conjugates satisfy the relation $\alpha_{1}+\alpha_{2}=\alpha_{3}+\alpha_{4}=:\beta\notin\mathbb{Q}$. 
Note that for any $r\in\mathbb{Q}$ the number $\alpha-r$ will also have this property. 
Moreover, $\alpha$ equals the sum of a quadratic and a cubic algebraic number if and only if $\alpha-r$ has the same property. 
Hence, by taking $r=tr(\alpha)/6$, we can assume that $\alpha$ has zero trace $tr(\alpha)=0$. Let $G$ be the Galois group of the normal closure of $\mathbb{Q}(\alpha)$ over $\mathbb{Q}$. Assume that $G$ is a subgroup of $S_6$, acting on the indices of the conjugates $\alpha_1,\alpha_2,\dotsc,\alpha_6$ of $\alpha$. Then, by Proposition~\ref{prop562}, we have the following:
\begin{itemize}
\item[$(i)$] $\beta$ is a cubic algebraic number.
\item[$(ii)$] One can label the algebraic conjugates $\alpha_1,\alpha_2,\dotsc,\alpha_6$ of $\alpha$ in such a way that these satisfy the relations
\begin{equation}\label{eq:rel4}
\begin{cases}
\beta_{1} = \alpha_{1}+\alpha_{2} = \alpha_{3}+\alpha_{4},\\
\beta_{2} = \alpha_{2}+\alpha_{5} = \alpha_{3}+\alpha_{6},\\
\beta_{3} = \alpha_{1}+\alpha_{6} = \alpha_{4}+\alpha_{5},
\end{cases}
\end{equation}
where $\beta_1=\beta,\beta_2,\beta_3$ are the algebraic conjugates of $\beta$.
\item[$(iii)$] Given the relations \eqref{eq:rel4}, there are exactly three options for the Galois group $G$:
\begin{enumerate}
\item $G=\langle \tau, \pi\ |\ \tau^{2}=\pi^{6}=id,\ \tau\pi\tau = \pi^{5}\rangle\cong D_{6}$; 
\item $G=\langle \pi\ |\ \pi^{6}=id\rangle\cong C_{6}$;
\item $G = \{id, \sigma, \sigma^{2}, \tau, \tau\sigma, \tau\sigma^{2}\}\cong S_3$.
\end{enumerate}
\end{itemize}
 
Here $\pi=(1\, 2\, 5\, 4\, 3\, 6)$, $\sigma=(1\ 3\ 5)(2\ 6\ 4)$ and $\tau = (1\,2)(3\,4)(5\,6)$. 

Consider the number $\alpha_1-\alpha_4$. The expression of $\beta_1$ in \eqref{eq:rel4} implies that $\alpha_2-\alpha_3=-(\alpha_1-\alpha_4)$. Moreover, 
\begin{align*}
\tau(\alpha_1-\alpha_4) &= \alpha_2-\alpha_3=-(\alpha_1-\alpha_4),\\
\pi(\alpha_1-\alpha_4) &= \alpha_2-\alpha_3=-(\alpha_1-\alpha_4),\\
\sigma(\alpha_1-\alpha_4) &= \alpha_3-\alpha_2=\alpha_1-\alpha_4.
\end{align*}
Hence, $\alpha_1-\alpha_4$ is a quadratic algebraic number. 
Moreover, the relations $\beta_2=\alpha_{2}+\alpha_{5} = \alpha_{3}+\alpha_{6}$ 
together with $tr(\alpha)=0$ imply $-2\beta_2 = \alpha_1+\alpha_4$, which is equivalent to 
\[
\alpha_1 = \frac{\alpha_1-\alpha_4}{2} - \beta_2.
\]
Consequently, $\alpha=\alpha_1$ is a sum of a quadratic algebraic number $(\alpha_1-\alpha_4)/2$ and a cubic algebraic number $-\beta_2$. 

\textit{Sufficiency.} Assume that $\alpha$ is a sum of a quadratic algebraic number $\gamma$ and a cubic algebraic number $\delta$. We will prove that $\alpha$ has degree 6 and some four distinct algebraic conjugates of $\alpha$ satisfy the relation $\alpha_{1}+\alpha_{2}=\alpha_{3}+\alpha_{4}\notin\mathbb{Q}$. Indeed, let $\gamma_1=\gamma$, $\gamma_2$ be the algebraic conjugates of $\gamma$ and let $\delta_1=\delta$, $\delta_2$, $\delta_3$ be the algebraic conjugates of $\delta$. Since the compositum $\mathbb{Q}(\gamma,\delta)$ contains $\gamma$ and $\delta$ of degree 2 and 3, respectively, 
it follows that the degree of  $\mathbb{Q}(\gamma,\delta)$ is divisible by $2\cdot 3=6$. On the other hand, $[\mathbb{Q}(\gamma,\delta):\mathbb{Q}]\leq [\mathbb{Q}(\gamma):\mathbb{Q}]\cdot [\mathbb{Q}(\delta):\mathbb{Q}] = 2\cdot 3=6$. Hence, $[\mathbb{Q}(\gamma,\delta):\mathbb{Q}]=6$. By Lemma~\ref{lempe}, we obtain that $\mathbb{Q}(\gamma,\delta) = \mathbb{Q}(\gamma+\delta)$. Therefore, $\alpha=\gamma+\delta$ is of degree 6. 
Hence, the numbers $\gamma_i+\delta_j$, for $i=1,2$ and $j=1,2,3$, are distinct algebraic conjugates of $\gamma+\delta$. The identity
\[
(\gamma_1+\delta_1) + (\gamma_2+\delta_2) = (\gamma_1+\delta_2) + (\gamma_2+\delta_1)
\]
implies that four distinct algebraic conjugates of $\alpha=\gamma+\delta$ satisfy 
\[
\alpha_{1}+\alpha_{2}=\alpha_{3}+\alpha_{4},
\]
where $\alpha_1=\gamma_1+\delta_1$, $\alpha_2=\gamma_2+\delta_2$, $\alpha_3=\gamma_1+\delta_2$ and $\alpha_4=\gamma_2+\delta_1$. 
Finally, since $tr(\gamma)=\gamma_1+\gamma_2$ and $tr(\delta)=\delta_1+\delta_2+\delta_3$ are rational numbers, we obtain that 
\[
\alpha_1+\alpha_2 = \gamma_1+\delta_1 + \gamma_2+\delta_2 = tr(\gamma) + tr(\delta) - \delta_3
\]
is a cubic algebraic number, and therefore $\alpha_1+\alpha_2\notin\mathbb{Q}$.
\end{proof}

\begin{proof}[Proof of Theorem ~\ref{444}] 
Let $\alpha$ be an algebraic number of degree $d\in\{4,5,6,7\}$  such that $tr(\alpha)=0$. Suppose that four distinct conjugates of $\alpha_{1} := \alpha$ satisfy the relation
\begin{equation}\label{eq:1259}
\alpha_{1}+\alpha_{2}=\alpha_{3}+\alpha_{4}.
\end{equation}
By Lemma $\ref{intro4}$, $d\neq 5$ and $d\neq 7$. Hence, $d=4$ or 6.

$(i)$ Suppose that $d=4$. Then \eqref{eq:1259} together with $tr(\alpha)=0$ imply $\alpha_{1}+\alpha_{2}=\alpha_{3}+\alpha_{4}=0$. Hence, $\alpha_2=-\alpha_1$, and therefore the minimal 
polynomial $p(x)$ of $\alpha$ is of the form $p(x)=x^4+ax^{2}+b\in\mathbb{Q}[x]$.



Conversely, let $p(x)=x^4+ax^{2}+b\in\mathbb{Q}[x]$ be an irreducible polynomial. Let $\beta,\gamma\in\mathbb{C}$ be two distinct roots of $p(x)$ such that $\gamma\neq -\beta$. 
Then $\alpha_1 = \beta$, $\alpha_2 = -\beta$, $\alpha_3 = \gamma$ and $\alpha_4 = -\gamma$ are 
all the roots of $p(x)$ and the relation \eqref{eq:1259} holds.


$(ii)$ Suppose that $d=6$ and the sum $\alpha_{1}+\alpha_{2}$ in \eqref{eq:1259} is a rational number. 
Then, by Proposition $\ref{333}$, $\alpha_{1}+\alpha_{2} = \alpha_{3}+\alpha_{4}=0$. 
This together with $tr(\alpha)=0$ imply $\alpha_2=-\alpha_1$. Therefore, the minimal 
polynomial $p(x)$ of $\alpha$ is of the form $p(x)=x^6+ax^{4}+bx^2+c\in\mathbb{Q}[x]$. 
Similarly, as in case $(ii)$, we see that some four distinct roots of any such irreducible polynomial   satisfy the relation \eqref{eq:1259}.

$(iii)$ Suppose that $d=6$ and the sum $\alpha_{1}+\alpha_{2}$ in \eqref{eq:1259} is not a rational number. 
Then, by Theorem~\ref{propqc}, $\alpha$ is a sum of a quadratic algebraic number $\gamma$ and a cubic algebraic number $\delta$. 
Let $\gamma_1=\gamma$, $\gamma_2$ be the algebraic conjugates of $\gamma$ and let $\delta_1=\delta$, $\delta_2$, $\delta_3$ be the algebraic conjugates of $\delta$. 
Then the numbers $\gamma_i+\delta_j$, for $i=1,2$ and $j=1,2,3$, are the algebraic conjugates of $\alpha=\gamma+\delta$. 
We have that $tr(\alpha)=0$. On the other hand, $tr(\alpha)$ equals the sum of all the numbers $\gamma_i+\delta_j$, for $i=1,2$ and $j=1,2,3$. The later sum equals $3(\gamma_1+\gamma_2)+2(\delta_1+\delta_2+\delta_3)$. 
Hence, $0=tr(\alpha) = 3tr(\gamma)+2tr(\delta)$. Therefore, $tr(\gamma)/2+tr(\delta)/3=0$ and we can represent $\alpha$ as 
\[
\alpha = \left(\gamma-\frac{tr(\gamma)}{2}\right) + \left(\delta-\frac{tr(\delta)}{3}\right).
\]
Note that $\gamma-tr(\gamma)/2$ and $\delta-tr(\delta)/3$ are quadratic and cubic algebraic numbers, respectively, both having trace zero. 
Consequently, without the loss of generality, we can assume that $tr(\gamma)=0$ and $tr(\delta)=0$ in the expression $\alpha=\gamma+\delta$.  
Then the minimal polynomial of $\gamma$ is of the form $x^2-a$ and the minimal polynomial of $\delta$ 
is of the form $R(x)=x^3+bx+c$, where $a,b,c\in\mathbb{Q}$. Moreover, in view of $tr(\gamma)=0$ and $\gamma_1\gamma_2=-a$, we obtain that $\gamma_1=\pm\sqrt{a}$ and $\gamma_2=\mp\sqrt{a}$. Now the minimal polynomial $p(x)$ of $\alpha$ can be expressed as
\begin{align}
p(x) &= \prod_{\substack{i=1,2\\j=1,2,3}} (x-\gamma_i-\delta_j) = \prod_{i=1,2} (x-\gamma_i-\delta_1)(x-\gamma_i-\delta_2)(x-\gamma_i-\delta_3)\nonumber\\
&= R(x-\gamma_1)R(x-\gamma_2) = R(x-\sqrt{a})R(x+\sqrt{a})\nonumber\\
&=x^{6} + (2 b - 3 a) x^{4} + 2 c x^{3} + (3 a^{2} + b^{2}) x^{2} + 2c(3 a + b) x\label{eq:ppol}\\ &\phantom{=}\,\,- a^{3} - 2 a^{2} b - a b^{2} + c^{2}.\nonumber
\end{align}
Conversely, given an irreducible polynomial $p(x)$ of the form \eqref{eq:ppol}, we can factor it as $p(x)=R(x-\sqrt{a})R(x+\sqrt{a})$, where  $R(x)=x^3+bx+c$. Note that $\sqrt{a}\notin\mathbb{Q}$, since $p(x)$ is irreducible. Consequently, $\sqrt{a}$ is a quadratic algebraic number. Moreover, $R(x)$ is irreducible. Indeed, if $R(x)$ factors as $R(x)=P(x)Q(x)$ with some polynomials $P(x),Q(x)\in\mathbb{Q}[x]$ both of degree $\geq 1$, then 
\[
p(x) = R(x-\sqrt{a})R(x+\sqrt{a}) = P(x-\sqrt{a})P(x+\sqrt{a})Q(x-\sqrt{a})Q(x+\sqrt{a})
\]
with both polynomials $P(x-\sqrt{a})P(x+\sqrt{a})$ and $Q(x-\sqrt{a})Q(x+\sqrt{a})$ having rational coefficients. 
This contradicts the assumption that $p(x)$ is irreducible. Hence, $R(x)$ is irreducible. 
Finally, the factorization $p(x)=R(x-\sqrt{a})R(x+\sqrt{a})$ implies that every root of $p(x)$ is 
a sum of a quadratic algebraic number $\pm\sqrt{a}$ and a root of $R(x)$, which is a cubic algebraic number. 
Therefore, by Theorem~\ref{propqc}, some four distinct roots of $p(x)$ satisfy the relation \eqref{eq:1259} with $\alpha_1+\alpha_2\notin\mathbb{Q}$.
\end{proof}

\textbf{Note.} 
\begin{itemize}
\item[1.] The polynomial in \eqref{eq:ppol} is obtained by expanding the product $R(x-\sqrt{a})R(x+\sqrt{a})$. This can be done either by hand or using a computer algebra system, e.g., SageMath \cite{sagemath}.\\
\item[2.] The polynomial in \eqref{eq:ppol} is irreducible if and only if $a$ is not a square of a rational number and the polynomial $R(x)=x^3+bx+c$ has no roots in $\mathbb{Q}$.
\end{itemize}

\begin{acknowledgement}
\end{acknowledgement}

\printbibliography

\end{document}